\newtheorem{prop}[theorem]{Proposition}
 \journalname{Bulletin of the Malaysian Mathematical Sciences Society}
\begin{document}

\title{Generalized Reverse Young and Heinz Inequalities \thanks{The author Shigeru Furuichi was partially supported by JSPS KAKENHI Grant Number 16K05257.}}


\author{Shigeru Furuichi \and Mohammad Bagher Ghaemi \and  Nahid Gharakhanlu.}

\authorrunning{ S. Furuichi \and M. B. Ghaemi \and N. Gharakhanlu} 

\institute{Shigeru Furuichi \at
              Department of Information Science, College of Humanities and Sciences, Nihon University, 3-25-40, Sakurajyousui, Setagaya-ku, Tokyo, 156-8550, Japan \\
              \email{furuichi@chs.nihon-u.ac.jp}           
           \and
            Mohammad Bagher Ghaemi \at
              School of Mathematics, Iran University of Science and Technology, 16846-13114, Narmak, Tehran, Iran\\
              \email{mghaemi@iust.ac.ir}
              \and
            Nahid Gharakhanlu \at
              School of Mathematics, Iran University of Science and Technology, 16846-13114, Narmak, Tehran, Iran\\
              \email{gharakhanlu\underline{
}nahid@mathdep.iust.ac.ir}
}

\date{Received: date / Accepted: date}

\maketitle

\begin{abstract}
In this paper, we study the further improvements of the reverse Young and Heinz inequalities for the wider range of $v$, namely  $v\in \mathbb{R}$. These modified inequalities are used to establish corresponding operator inequalities on a Hilbert space.
\keywords{Young's inequality\and  Heinz inequality\and Operator inequality}
\subclass{15A39\and 47A63\and 47A60\and 47A64}
\end{abstract}

\section{Introduction}
The weighted arithmetic-geometric mean inequality, which is also called Young's inequality, states
\begin{equation*}
(1-v) a +v b \geq a^{1-v} b^v
\end{equation*}
for $a, b \geq 0$ and $v \in [0,1]$. If $v=\frac{1}{2}$, we obtain the arithmetic-geometric mean inequality
\begin{equation*}
\frac{a+b}{2}\geq \sqrt{ab}.
\end{equation*}
The Heinz means, introduced in \cite{bb}, are defined by
\begin{equation*}
H_{v}(a,b)=\frac{a^{1-v}b^{v}+a^{v}b^{1-v}}{2}
\end{equation*}
for $a,b\geq 0$ and $v\in [0,1]$. It is easy to see that
\begin{equation*}
\sqrt{ab}\leq H_{v}(a,b)\leq \frac{a+b}{2}\qquad v\in [0,1],
\end{equation*}
which is called Heinz inequality.

Let $B(H)$ denote the $C^{*}$-algebra of all bounded linear operators on a complex Hilbert space $H$. A self-adjoint operator $A\in B(H)$ is called positive, and we write $A\geq 0$ if $\langle Ax,x\rangle \geq 0$ for all $x\in H$.
The set of all positive operators is denoted by $B^{+}(H)$.
The set of all invertible operators in $B^{+}(H)$ is denoted by $B^{++}(H)$. We say $A\geq B$ if $A-B\geq 0$.

Let $A, B\in B^{++}(H)$ and $v\in [0,1]$. The $v$-weighted operator geometric mean of $A$ and $B$, denoted by $A\sharp_{v} B$, is defined as
\begin{equation*}
A\sharp_{v} B=A^{\frac{1}{2}}\left(A^{-\frac{1}{2}}B A^{-\frac{1}{2}}\right)^{v} A^{\frac{1}{2}}
\end{equation*}
and the $v$-weighted operator arithmetic mean of $A$ and $B$, denoted by $A\nabla_{v} B$, is
\begin{equation*}
A\nabla_{v} B= (1-v)A+vB.
\end{equation*}
When $v=\frac{1}{2}$, $A\sharp_{\frac{1}{2}}B$ and $A\nabla_{\frac{1}{2}} B$ are called operator geometric mean and operator arithmetic mean, and denoted by $A\sharp B$ and $A\nabla B$, respectively \cite{nn}. For positive operators $A, B$ and $v\in[0,1]$, we have \cite[Definition 5.2]{Mond}
\begin{equation*}
A\sharp_{v} B=B\sharp_{1-v} A.
\end{equation*}
It is well known that if $A, B\in B^{++}(H)$ and $v\in [0,1]$, then \cite{ee,ff}
\begin{align*}
A\nabla_{v} B \geq A\sharp_{v} B,
\end{align*}
which is the operator version of the scalar Young's inequality. An operator version of Heinz means was introduced in \cite{bb} by
\begin{equation*}
H_{v}(A,B)=\frac{A\sharp_{v} B+A\sharp_{1-v}B}{2},
\end{equation*}
where $v\in[0,1]$. In particular $H_{1}(A,B)=H_{0}(A,B)=A\nabla B$. It is easy to see that the Heinz operator means interpolate between the arithmetic and geometric operator means
\begin{equation*}
A\sharp B\leq H_{v}(A,B)\leq A\nabla B,
\end{equation*}
which is called the Heinz operator inequality \cite{ii,jj}.
We note that we use in Sect. 3 the following notations
$$A\natural_v B \equiv A^{\frac{1}{2}}\left(A^{-\frac{1}{2}}B A^{-\frac{1}{2}}\right)^{v} A^{\frac{1}{2}},\quad \hat{H}_v(A,B)\equiv \frac{A\natural_v B+A\natural_{1-v}B}{2},$$
for all $v \in \mathbb{R}$ including the range $v \notin [0,1]$.

Improvements of Young and Heinz inequalities and their reverses have been done for the weight $v\in [0,1]$ by many researchers. We refer the reader to \cite{AFK2015,aa,D2015_03,cc,FM2011,dd,xx,gg,hh,ii,jj,kk,ll,mm,qq,SC,SM,SS,oo} as a sample of the extensive use of Young and Heinz inequalities. One of the first refinements is as follows in Lemma \ref{l01} which one positive term was added to the right-hand side of the Young's inequality.

\begin{lemma}\textsc{(\cite{kk})}\label{l01}
Let $a,b\geq 0$ and $v\in[0,1]$. Then
\begin{align*}
(1-v)a+vb &\geq a^{1-v}b^{v}+r_{0}\left(\sqrt{a}-\sqrt{b}\right)^{2}
\end{align*}
where $r_{0}=\min\lbrace v,1-v \rbrace$.
\end{lemma}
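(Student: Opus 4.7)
My plan is to split on the sign of $v-\tfrac{1}{2}$ so that the constant $r_{0}=\min\{v,1-v\}$ becomes explicit, expand the perfect square $(\sqrt{a}-\sqrt{b})^{2}=a+b-2\sqrt{ab}$, and then reduce what is left to a single application of the classical scalar Young inequality, but with a rescaled weight lying in $[0,1]$.

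First, I would treat the case $v\in[0,\tfrac{1}{2}]$, in which $r_{0}=v$. Moving the extra term to the left-hand side and using $v(\sqrt{a}-\sqrt{b})^{2}=va+vb-2v\sqrt{ab}$, the desired inequality becomes
$$(1-2v)\,a+2v\sqrt{ab}\;\geq\;a^{1-v}b^{v}.$$
Since $2v\in[0,1]$, Young's inequality applied to the two nonnegative numbers $a$ and $\sqrt{ab}$ with weight $2v$ yields
$$(1-2v)\,a+2v\sqrt{ab}\;\geq\;a^{1-2v}\bigl(\sqrt{ab}\bigr)^{2v}\;=\;a^{1-v}b^{v},$$
which is exactly what is required.

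The complementary case $v\in[\tfrac{1}{2},1]$, where $r_{0}=1-v$, can be handled in one of two equivalent ways. The cleanest is to invoke the symmetry $(a,b,v)\mapsto(b,a,1-v)$: the two sides of the claim and the value of $r_{0}$ are all invariant under this substitution, so the case reduces to the one just handled. If a direct argument is preferred, the same cancellation gives
$$(2v-1)\,b+2(1-v)\sqrt{ab}\;\geq\;a^{1-v}b^{v},$$
which is Young's inequality applied to $b$ and $\sqrt{ab}$ with weight $2(1-v)\in[0,1]$.

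I do not foresee a real obstacle here: the entire argument hinges on the single observation that $2r_{0}\in[0,1]$, which is exactly the range in which the unrefined Young inequality is available. The only point requiring minor care is the degenerate boundary $a=0$ or $b=0$, where the claim collapses to the trivial inequality $(1-v)a+vb\geq r_{0}(a+b)$, valid because $1-v\geq r_{0}$ and $v\geq r_{0}$.
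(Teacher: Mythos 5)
Your proof is correct: absorbing $r_{0}(\sqrt{a}-\sqrt{b})^{2}$ into the left-hand side and applying the plain Young inequality with the rescaled weight $2r_{0}\in[0,1]$ to the pair $a,\sqrt{ab}$ (resp.\ $b,\sqrt{ab}$) is exactly the standard argument of Kittaneh--Manasrah, whom the paper cites for this lemma without reproving it. It is also precisely the device the paper itself uses in the proof of Corollary \ref{c2}, where the same identity $(1-v)a+vb-v(\sqrt{a}-\sqrt{b})^{2}=(1-2v)a+2v\sqrt{ab}$ is combined with Lemma \ref{l0} instead of Young's inequality, so your approach is in complete harmony with the paper.
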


However, in the recent paper \cite{oo}, Zhao and Wu provided two refining terms of Young's inequality in the following way.

\begin{lemma}\textsc{(\cite{oo})}\label{p1}
Let $a,b\geq 0$ and  $v\in[0,1]$. 
\begin{enumerate}[(i)]
\item  If $v \in [0,\frac{1}{2}]$, then
\begin{align*}
(1-v) a+v b \geq a^{1-v} b^v +v (\sqrt{a}-\sqrt{b})^2 +r_{0}(\sqrt{a}-\sqrt[4]{ab})^2,
\end{align*}

\item If $v \in [\frac{1}{2},1]$, then
\begin{align*}
(1-v) a+v b \geq a^{1-v} b^v + (1-v) (\sqrt{a}-\sqrt{b})^2 +r_{0}(\sqrt{b}-\sqrt[4]{ab})^2,
\end{align*}
\end{enumerate}
where $r=\min\lbrace v,1-v \rbrace$ and $r_{0}=\min\lbrace 2r,1-2r \rbrace$.
\end{lemma}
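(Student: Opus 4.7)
The plan is to reduce case (i) to a single application of the refined Young inequality in Lemma \ref{l01} via a clever choice of arguments, and then deduce case (ii) from case (i) by a symmetry argument in $a, b$.

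For part (i), the key observation is the algebraic identity
\[
(1-v)a + vb - v(\sqrt{a}-\sqrt{b})^2 = (1-2v)a + 2v\sqrt{ab},
\]
obtained by expanding $v(\sqrt{a}-\sqrt{b})^2 = v(a+b) - 2v\sqrt{ab}$. Since $v \in [0, 1/2]$ we have $2v \in [0,1]$, so the right-hand side is a legitimate $(2v)$-weighted convex combination of the nonnegative numbers $a$ and $\sqrt{ab}$. Apply Lemma \ref{l01} to these two numbers with weight $2v$:
\[
(1-2v) a + 2v \sqrt{ab} \geq a^{1-2v}(\sqrt{ab})^{2v} + \min\{2v, 1-2v\}\bigl(\sqrt{a} - \sqrt[4]{ab}\bigr)^2.
\]
Now $a^{1-2v}(\sqrt{ab})^{2v} = a^{1-v}b^v$, and since $v \in [0, 1/2]$ gives $r = v$ and hence $\min\{2v, 1-2v\} = r_0$, rearranging yields (i).

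For part (ii), apply (i) with $a$ and $b$ interchanged and $v$ replaced by $1-v \in [0, 1/2]$. The left-hand side becomes $vb + (1-v)a$, the term $b^{v}a^{1-v}$ coincides with $a^{1-v}b^v$, the square $(\sqrt{b}-\sqrt{a})^2$ equals $(\sqrt{a}-\sqrt{b})^2$, and $\sqrt[4]{ba}=\sqrt[4]{ab}$. The parameter $r_0$ matches as well: for $v \in [1/2, 1]$ we have $r = 1-v$, so $r_0 = \min\{2(1-v), 2v-1\}$ agrees with the $r_0$ produced by running (i) at weight $1-v$. This gives (ii).

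The only nontrivial step is spotting the absorption identity, which turns $v(\sqrt{a}-\sqrt{b})^2$ into a reparametrization of the convex combination (weight doubled from $v$ to $2v$, second argument replaced by $\sqrt{ab}$). Once this is seen the rest is a direct invocation of Lemma \ref{l01} followed by the symmetry reduction, so I expect no further difficulty beyond keeping the bookkeeping of $r$ and $r_0$ straight under the substitution $v \mapsto 1-v$.
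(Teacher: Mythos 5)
Your argument is correct. Note that the paper itself does not prove this lemma at all --- it is quoted from Zhao and Wu \cite{oo} --- so there is no in-paper proof to match; what you give is a legitimate self-contained derivation. Your route reduces everything to a single application of Lemma \ref{l01}: the absorption identity $(1-v)a+vb-v(\sqrt{a}-\sqrt{b})^{2}=(1-2v)a+2v\sqrt{ab}$ lets you apply the Kittaneh--Manasrah refinement to the pair $(a,\sqrt{ab})$ with the doubled weight $2v\in[0,1]$, and $a^{1-2v}(\sqrt{ab})^{2v}=a^{1-v}b^{v}$ together with $r_{0}=\min\{2v,1-2v\}$ gives (i); the bookkeeping under $a\leftrightarrow b$, $v\mapsto 1-v$ for (ii) is also right, since for $v\in[\tfrac12,1]$ both readings of $r_{0}$ equal $\min\{2(1-v),2v-1\}$. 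It is worth observing that this doubling/absorption device is exactly the one the present paper uses in the reverse direction: the proof of Corollary \ref{c2} rewrites $(1-v)a+vb-v(\sqrt{a}-\sqrt{b})^{2}$ as $(1-2v)a+2v\sqrt{ab}$ and invokes Lemma \ref{l0}, and Theorem \ref{t2} iterates the same telescoping idea. So your proof of the quoted lemma is not only valid but structurally the forward-direction analogue of the paper's own method, whereas the original source establishes the result by a more computational case analysis over subintervals of $[0,1]$.
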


In the same paper, the following refined reverse versions have been proved too.
\begin{lemma}\textsc{(\cite[Lemma 2]{oo})}\label{l1}
Let $a,b\geq 0$ and  $v\in[0,1]$.
\begin{enumerate}[(i)]
\item If $v \in [0,\frac{1}{2}]$, then
\begin{align*}
(1-v)a+vb &\leq a^{1-v}b^{v}+(1-v)(\sqrt{a}-\sqrt{b})^{2}-r_{0}(\sqrt{b}-\sqrt[4]{ab})^{2},
\end{align*}
\item If $v \in [\frac{1}{2},1]$, then
\begin{align*}
(1-v)a+vb &\leq a^{1-v}b^{v}+v(\sqrt{a}-\sqrt{b})^{2}-r_{0}(\sqrt{a}-\sqrt[4]{ab})^{2},
\end{align*}
\end{enumerate}
where $r=\min\lbrace v,1-v \rbrace$ and $r_{0}=\min\lbrace 2r,1-2r \rbrace$.
\end{lemma}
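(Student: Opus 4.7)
I would prove part (i) directly and then deduce part (ii) by symmetry. The symmetry in question is the substitution $(a,b,v)\mapsto(b,a,1-v)$, under which $(1-v)a+vb$, $a^{1-v}b^v$ and $(\sqrt a-\sqrt b)^2$ are invariant, $(\sqrt b-\sqrt[4]{ab})^2$ turns into $(\sqrt a-\sqrt[4]{ab})^2$, the range $[0,\tfrac12]$ swaps with $[\tfrac12,1]$, and $r=\min\{v,1-v\}$ (hence $r_0$) is unchanged. So (i) and (ii) are genuinely equivalent.

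For part (i), both sides are homogeneous of degree one in $(a,b)$, so I would normalize $b=1$ and introduce $t=(a/b)^{1/4}$, so that $a=t^4$, $\sqrt a=t^2$ and $\sqrt[4]{ab}=t$. A short computation, in which $(1-v)(\sqrt a-\sqrt b)^2-(1-v)a-vb$ collapses to $(1-2v)-2(1-v)t^2$, rewrites the target inequality as
\begin{equation*}
t^{4(1-v)} \;-\; 2(1-v)t^{2} \;+\; (1-2v) \;\ge\; r_{0}(1-t)^{2}.
\end{equation*}
Setting $p:=4(1-v)\in[2,4]$, so that $p/2=2(1-v)$ and $p/2-1=1-2v$, the claim becomes
$t^{p}-(p/2)t^{2}+(p/2-1)\ge r_{0}(1-t)^{2}$.

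The main step is a single application of Bernoulli: since $p-1\ge 1$ and $t>0$, we have $t^{p-1}\ge 1+(p-1)(t-1)$, and multiplying by $t>0$ preserves the inequality and gives $t^{p}\ge(p-1)t^{2}-(p-2)t$. Plugging this lower bound into the left-hand side and using $p-2=2(p/2-1)$, everything factors as
\begin{equation*}
t^{p}-(p/2)t^{2}+(p/2-1)\;\ge\;(p/2-1)t^{2}-(p-2)t+(p/2-1)\;=\;(p/2-1)(t-1)^{2}.
\end{equation*}
Finally, since $v\in[0,\tfrac12]$ gives $r=v$ and hence $r_{0}=\min\{2v,1-2v\}\le 1-2v=p/2-1$, the desired estimate follows.

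The only place where creativity is needed is the initial substitution: a naive choice such as $s=\sqrt{a/b}$ leaves the fourth root $\sqrt[4]{ab}$ awkward, whereas $t=\sqrt[4]{a/b}$ makes every term a monomial in $t$. After that, the Bernoulli step is essentially forced, and no case split between $t<1$ and $t>1$ is needed because the form $y^{r}\ge 1+r(y-1)$ with $r\ge 1$, $y>0$ covers both. Part (ii) then falls out by the symmetry $(a,b,v)\leftrightarrow(b,a,1-v)$ noted above.
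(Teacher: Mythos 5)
Your proof is correct. The reduction (after normalizing $b=1$, $t=\sqrt[4]{a/b}$) to $t^{p}-(p/2)t^{2}+(p/2-1)\ge r_{0}(1-t)^{2}$ with $p=4(1-v)\in[2,4]$ is accurate; the tangent-line/Bernoulli step $t^{p-1}\ge 1+(p-1)(t-1)$ is valid for all $t>0$ because $p-1\ge 1$; and the substitution $(a,b,v)\mapsto(b,a,1-v)$ does convert (i) into (ii). The only cosmetic point is that your normalization assumes $a,b>0$ while the lemma allows $a,b\ge 0$; the boundary cases follow by continuity or a one-line direct check.

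Be aware, though, that the paper gives no proof of Lemma \ref{l1}: it is quoted from \cite[Lemma 2]{oo}, and the paper's own route to it is indirect, via Theorem \ref{t2} (whose proof telescopes the refining terms and then applies the supplemental Young inequality of Lemma \ref{l0} to the remaining two-term combination), together with Proposition \ref{p3} and Remarks \ref{r1}--\ref{r2}. So your argument is a genuinely different, self-contained proof. In fact it proves more than asked: since $r_{0}$ enters only through the final estimate $r_{0}\le 1-2v$, what you establish for $v\in[0,\tfrac12]$ is the inequality with $1-2v$ in place of $r_{0}$, which is precisely inequality \eqref{eq19} of Corollary \ref{c1} (the $n=2$ instance of Theorem \ref{t2}) restricted to that range; by the symmetry you invoke, the same holds for \eqref{eq20} on $[\tfrac12,1]$. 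The paper obtains this strengthening from Lemma \ref{l0}, whereas you get it from a single convexity estimate with no case analysis — more elementary, though the paper's telescoping method has the advantage of extending to arbitrary $n$ and to all real $v$ outside the excluded interval.
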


Sababheh and Choi \cite{SC} obtained a complete refinement of the Young's inequality by adding as many refining terms as we like. For $a,b> 0$, $n \in \mathbb{N}$ and $v\in[0,1]$
\begin{align*}
(1-v)a+vb &\geq a^{1-v}b^{v}\\
&\quad + \sum_{k=1}^n s_{k}(v)
\left( \sqrt[2^k]{a^{2^{k-1}-j_{k}(v)} b^{j_{k}(v)}} -\sqrt[2^k]{a^{2^{k-1}-j_{k}(v)-1} b^{j_{k}(v)+1} }\right)^2,
\end{align*}
where $[x]$ is the greatest integer less than or equal to $x$ and
\begin{align*}
j_{k}(v)&=[2^{k-1}v],\\
r_{k}(v)&=[2^{k} v],\\
s_{k}(v)&=(-1)^{r_{k}(v)}2^{k-1}v+(-1)^{r_{k}(v)+1}\left[\frac{r_{k}(v)+1}{2}\right].
\end{align*}

Quite recently, Sababheh and Moslehian \cite{SM} gave a full description of all other refinements of the reverse Young's inequality in the literature as follows.

\begin{lemma}\textsc{( \cite[Theorem 2.1]{SM})}\label{SM}
Let $a,b> 0$ and $v\in[0,1]$.
\begin{enumerate}[(i)]
\item If $v \in [0,\frac{1}{2}]$, then
\begin{align*}
(1-v)a+vb &\leq a^{1-v}b^{v}+(1-v)(\sqrt{a}-\sqrt{b})^{2}-S_{n}(2v, \sqrt{ab}, b).
\end{align*}
\item If $v \in [\frac{1}{2},1]$, then
\begin{align*}
(1-v)a+vb &\leq a^{1-v}b^{v}+v(\sqrt{a}-\sqrt{b})^{2}-S_{n}(2(1-v), \sqrt{ab}, a).
\end{align*}
\end{enumerate}
Where 
\begin{align*}
S_{n}(v, a, b)&=\sum_{k=1}^n s_{k}(v) \left( \sqrt[2^k]{b^{2^{k-1}-j_{k}(v)} a^{j_{k}(v)}} -\sqrt[2^k]{a^{j_{k}(v)+1} b^{2^{k-1}-j_{k}(v)-1}}\right)^2,\\
j_{k}(v)&=[2^{k-1}v],\\
r_{k}(v)&=[2^{k} v],\\
s_{k}(v)&=(-1)^{r_{k}(v)}2^{k-1}v+(-1)^{r_{k}(v)+1}\left[\frac{r_{k}(v)+1}{2}\right].
\end{align*}
\end{lemma}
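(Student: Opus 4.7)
The plan is to reduce case (i) of the lemma to the Sababheh--Choi refinement of the forward Young inequality stated just above the lemma; case (ii) then follows from (i) by the symmetry $(v, a, b) \mapsto (1-v, b, a)$, under which the weight $(1-v)$ turns into $v$ and $S_n(2v, \sqrt{ab}, b)$ turns into $S_n(2(1-v), \sqrt{ab}, a)$, while every other term is symmetric in $a$ and $b$.

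For (i), I would first expand the square $(\sqrt{a}-\sqrt{b})^2$ and rearrange, so that the claim becomes equivalent to
\[ S_n(2v, \sqrt{ab}, b) \;\leq\; a^{1-v} b^{v} + (1-2v)\,b - 2(1-v)\sqrt{ab}. \]
Next I would substitute $w = 2v \in [0,1]$ and $t = \sqrt{a/b}$. Using $\sqrt{ab}/b = t$, the base inside each $2^k$-th root of $S_n$ factors as $b^{2^{k-1}} t^{j_k(w)}$ (and similarly with $j_k(w)+1$ for the second term), so each summand of $S_n(2v,\sqrt{ab},b)$ reduces to $b \cdot s_k(w)\bigl(t^{j_k(w)/2^k} - t^{(j_k(w)+1)/2^k}\bigr)^2$, while the right-hand side simplifies to $b\bigl[t^{2-w} - 1 - (2-w)(t-1)\bigr]$. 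After dividing by $b>0$, the claim reduces to
\[ t^{2-w} - 1 - (2-w)(t-1) \;\geq\; \sum_{k=1}^n s_k(w)\bigl(t^{j_k(w)/2^k} - t^{(j_k(w)+1)/2^k}\bigr)^2. \]

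To close the estimate, I would apply the Sababheh--Choi refinement to the pair $(1, t)$ with parameter $w$, which yields
\[ (1-w) + w t - t^{w} \;\geq\; \sum_{k=1}^n s_k(w)\bigl(t^{j_k(w)/2^k} - t^{(j_k(w)+1)/2^k}\bigr)^2, \]
and then verify the elementary inequality
\[ t^{2-w} - 1 - (2-w)(t-1) \;\geq\; (1-w) + w t - t^{w}, \]
which after cancellation collapses to $t^{2-w} + t^{w} \geq 2t$, an instance of the two-term AM--GM applied to $t^{2-w}$ and $t^{w}$.

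The main obstacle I anticipate is spotting the substitutions $w = 2v$ and $t = \sqrt{a/b}$, which translate the reverse-Young problem on $v \in [0,1/2]$ into a forward-Young problem with parameter $w \in [0,1]$; once this translation is in place the algebra collapses and only a single AM--GM step is required. A secondary technical point is tracking the $2^k$-th-root exponents in $S_n$ carefully enough that the reduction is exact, rather than merely asymptotically correct.
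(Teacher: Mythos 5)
Your proposal is correct. Note that the paper itself offers no proof of this lemma --- it is quoted verbatim from Sababheh--Moslehian \cite[Theorem 2.1]{SM} --- so there is no in-paper argument to compare against; your derivation is a legitimate self-contained proof built from the Sababheh--Choi forward refinement quoted earlier in the introduction. I checked the key computations: after subtracting $(1-v)(\sqrt a-\sqrt b)^2$ the left side collapses to $(2v-1)b+2(1-v)\sqrt{ab}$, so the claim is indeed equivalent to $S_n(2v,\sqrt{ab},b)\le a^{1-v}b^v+(1-2v)b-2(1-v)\sqrt{ab}$; with $w=2v\in[0,1]$ and $t=\sqrt{a/b}$ the term $S_n(2v,\sqrt{ab},b)$ homogenizes exactly to $b\sum_k s_k(w)\bigl(t^{j_k(w)/2^k}-t^{(j_k(w)+1)/2^k}\bigr)^2$, which is $b$ times the Sababheh--Choi correction for the pair $(1,t)$; and the final comparison reduces, as you say, to $t^{2-w}+t^{w}\ge 2t$, i.e.\ the Heinz/AM--GM inequality $a^{1-v}b^v+a^vb^{1-v}\ge 2\sqrt{ab}$. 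The symmetry $(v,a,b)\mapsto(1-v,b,a)$ for part (ii) is also legitimate since $(\sqrt a-\sqrt b)^2$ and $a^{1-v}b^v$ are invariant under it. This is essentially the argument used in the cited source, so your route is the standard one rather than a genuinely new one.
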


In the study of Young's inequalities, supplemental Young's inequality   
$$
a^v v^{1-v} \geq v a + (1-v) b
$$
for $a, b>0$ and $v \notin[0,1]$
is often discussed. 
Our main idea in this paper is to extend the range of $v$ and to give the tighter bounds of the reverse Young's inequalities proved in \cite{SM} and  \cite{oo}. In Theorem \ref{t2}, we will obtain a new generalization of the reverse Young's inequality which is stronger than the reverse Young's inequalities shown in \cite[Theorem 2.1]{SM} and \cite[Lemma 2]{oo}. Theorem \ref{t11} is another refinement of  \cite[Theorem 2.9]{SC} which extend the range of $v$. In Sect. 3, these modified inequalities are used to establish corresponding operator inequalities on a Hilbert space. We   emphasize that the significance of the inequalities in this paper is to have the wider range, namely $v\in \mathbb{R}$, and tighter bounds.

\section{Generalizations of the Reverse  Scalar Young and Heinz Inequalities}
In this section, we present the numerical inequalities needed to prove the operator versions. We start from the following  lemma to prove our main result.

\begin{lemma}\textsc{(\cite[Lemma 2.1]{aa})}\label{l0}
Let $a,b> 0$ and $v\notin [0,1]$. Then
\begin{align*}
(1-v)a+vb \leq a^{1-v}b^{v}.
\end{align*}
\end{lemma}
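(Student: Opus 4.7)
My plan is to treat both sides as functions of $v$ and exploit the concavity of their difference. Define
$$\varphi(v) := (1-v)a + vb - a^{1-v} b^{v}, \qquad v \in \mathbb{R}.$$
The target inequality is equivalent to $\varphi(v) \leq 0$ for all $v \notin [0,1]$, and the claim then reduces to two observations: $\varphi$ vanishes at the endpoints $v=0$ and $v=1$, and $\varphi$ is concave on $\mathbb{R}$.

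First I would verify the boundary values: $\varphi(0) = a - a = 0$ and $\varphi(1) = b - b = 0$. Next, I would compute the second derivative. The linear part $(1-v)a+vb$ contributes nothing, while $f(v) = a^{1-v}b^{v} = a(b/a)^{v}$ satisfies $f''(v) = f(v)\bigl(\ln(b/a)\bigr)^{2} \geq 0$, so $f$ is convex. Hence $\varphi'' = -f'' \leq 0$, i.e., $\varphi$ is concave on $\mathbb{R}$.

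The final step is to apply concavity at the endpoints. For $v>1$, write $1 = \tfrac{v-1}{v}\cdot 0 + \tfrac{1}{v}\cdot v$ as a convex combination; concavity gives $0 = \varphi(1) \geq \tfrac{1}{v}\varphi(v)$, so $\varphi(v)\leq 0$. For $v<0$, write $0 = \tfrac{1}{1-v}\cdot v + \tfrac{-v}{1-v}\cdot 1$; concavity gives $0 = \varphi(0) \geq \tfrac{1}{1-v}\varphi(v)$, so again $\varphi(v) \leq 0$. This establishes the lemma.

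There is no serious obstacle here; the only care needed is to express $1$ (resp.\ $0$) as the correct convex combination of $0$ and $v$ (resp.\ $v$ and $1$) when $v \notin [0,1]$, so that one endpoint of the secant is a known zero of $\varphi$. A completely equivalent route would avoid calculus by reducing to the standard Young inequality: for $v>1$, set $t=1/v\in(0,1)$ and apply $(1-t)x+ty \geq x^{1-t}y^{t}$ with $x=a$, $y=a^{1-v}b^{v}$; a direct computation gives right-hand side $b$ and left-hand side $\tfrac{v-1}{v}a + \tfrac{1}{v}a^{1-v}b^{v}$, and multiplying through by $v>0$ yields the claim. The case $v<0$ then follows by applying the $v>1$ case to the pair $(b,a)$ with weight $1-v>1$.
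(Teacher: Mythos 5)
Your proof is correct. Note, however, that the paper does not prove this lemma at all: it is quoted verbatim from Bakherad--Moslehian \cite[Lemma 2.1]{aa} and used as a black box, so there is no in-paper argument to compare yours against. Your argument is the standard one for the supplemental Young inequality: the function $g(v)=a^{1-v}b^{v}=a(b/a)^{v}$ is convex on $\mathbb{R}$, the line $(1-v)a+vb$ is its secant through the points $v=0$ and $v=1$, and a convex function lies above the extension of that secant outside $[0,1]$; your bookkeeping with $\varphi=L-g$, the convex combinations $1=\tfrac{v-1}{v}\cdot 0+\tfrac{1}{v}\cdot v$ and $0=\tfrac{1}{1-v}\cdot v+\tfrac{-v}{1-v}\cdot 1$, and the correct sign conclusions are all in order. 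Your second, calculus-free route (applying the ordinary Young inequality with weight $t=1/v\in(0,1)$ to the pair $x=a$, $y=a^{1-v}b^{v}$, then handling $v<0$ by symmetry in $(a,b)$ and $v\mapsto 1-v$) is also valid and is arguably the cleaner derivation, since it exhibits the $v\notin[0,1]$ case as a formal consequence of the $v\in[0,1]$ case rather than as a separate convexity argument; either version would serve as a self-contained proof of Lemma \ref{l0}.
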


\begin{corollary}\label{c2}
Let $a,b> 0$ and $\frac{1}{2}\neq v\in \mathbb{R}$.
\begin{enumerate}[(i)]
\item If $v \notin [0,\frac{1}{2}]$, then
\begin{align*}
(1-v)a+vb &\leq a^{1-v}b^{v}+v(\sqrt{a}-\sqrt{b})^{2}.
\end{align*}

\item If $v \notin [\frac{1}{2},1]$, then
\begin{align*}
(1-v)a+vb &\leq a^{1-v}b^{v}+(1-v)(\sqrt{a}-\sqrt{b})^{2}.
\end{align*}
\end{enumerate}
\end{corollary}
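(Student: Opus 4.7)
The plan is to derive each inequality from Lemma \ref{l0} after a suitable reparametrization. The key observation is that $a^{1-v}b^{v}$ admits the factorizations
\[
a^{1-v}b^{v} = a^{1-2v}\bigl(\sqrt{ab}\bigr)^{2v} = b^{2v-1}\bigl(\sqrt{ab}\bigr)^{2(1-v)},
\]
which exactly match the right-hand side of Lemma \ref{l0} when we take the pair $(a,\sqrt{ab})$ with weight $2v$, or the pair $(b,\sqrt{ab})$ with weight $2(1-v)$, respectively. So the strategy is: rewrite the left-hand side in a form that exposes one of these pairs, subtract the corresponding squared term, and invoke Lemma \ref{l0}.

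For part (i), I would start from the elementary identity
\[
(1-v)a+vb = (1-2v)a + 2v\sqrt{ab} + v\bigl(\sqrt{a}-\sqrt{b}\bigr)^{2},
\]
obtained by expanding $b = 2\sqrt{ab} - a + (\sqrt{a}-\sqrt{b})^{2}$ and collecting terms. Moving the squared term to the right side reduces the claim to
\[
(1-2v)a + 2v\sqrt{ab} \;\leq\; a^{1-2v}\bigl(\sqrt{ab}\bigr)^{2v}.
\]
This is precisely Lemma \ref{l0} applied with weight $2v$ to the pair $(a,\sqrt{ab})$, and the hypothesis of that lemma, namely $2v \notin [0,1]$, is equivalent to $v \notin [0,\frac{1}{2}]$.

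For part (ii), I would use the analogous identity
\[
(1-v)a+vb = \bigl(2v-1\bigr)b + 2(1-v)\sqrt{ab} + (1-v)\bigl(\sqrt{a}-\sqrt{b}\bigr)^{2},
\]
derived from $a = 2\sqrt{ab} - b + (\sqrt{a}-\sqrt{b})^{2}$. The inequality then collapses to
\[
\bigl(2v-1\bigr)b + 2(1-v)\sqrt{ab} \;\leq\; b^{2v-1}\bigl(\sqrt{ab}\bigr)^{2(1-v)},
\]
which is Lemma \ref{l0} with weight $2(1-v)$ on the pair $(b,\sqrt{ab})$. The condition $2(1-v)\notin[0,1]$ translates precisely to $v\notin[\frac{1}{2},1]$, matching the hypothesis.

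There is no real obstacle here; the only thing one must be careful about is bookkeeping of the exponents in the factorization of $a^{1-v}b^{v}$ and verifying that the transformed weight $2v$ (resp.\ $2(1-v)$) falls outside $[0,1]$ under exactly the stated hypothesis on $v$. Once these identifications are made, both parts are immediate consequences of Lemma \ref{l0}. The excluded case $v=\frac{1}{2}$ corresponds to the transformed weight hitting $1$, which is why it is omitted from the statement.
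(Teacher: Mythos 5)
Your proposal is correct and follows essentially the same route as the paper: the identity $(1-v)a+vb-v(\sqrt{a}-\sqrt{b})^{2}=(1-2v)a+2v\sqrt{ab}$ followed by Lemma \ref{l0} applied to the pair $(a,\sqrt{ab})$ with weight $2v$ is exactly the paper's argument for part (i). The only cosmetic difference is that you prove part (ii) directly with weight $2(1-v)$ on the pair $(b,\sqrt{ab})$, whereas the paper deduces it from part (i) by swapping $a,b$ and $v,1-v$; these are the same computation.
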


\begin{proof}
\textit {(i)}  If $v \notin [0,\frac{1}{2}]$, then
\begin{align*}
&(1-v)a+vb-v(\sqrt{a}-\sqrt{b})^{2}\\
&=(1-2v)a+(2v)\sqrt{ab}\\
&\leq a^{(1-2v)} (\sqrt{ab})^{2v}\qquad \text{( by Lemma \ref{l0})}\\
&=a^{1-v}b^{v}. 
\end{align*}

\textit{(ii)}  If $v \notin [\frac{1}{2},1]$, then $(1-v) \notin [0,\frac{1}{2}]$. So by changing two elements $a,b$ and two weights $v,1-v$ in \textit {(i)}, the desired inequality is obtained.\qed
\end{proof} 

Next, we represent our main result which is the reverse Young's inequality for $v\in \mathbb{R}$.
\begin{theorem}\label{t2}
Let $a,b > 0$, $n \in \mathbb{N}$ such that $n\geq 2$ and $\frac{1}{2}\neq v\in \mathbb{R}$. Then,
\begin{enumerate}[(i)]
\item If $v \notin [\frac{1}{2},\frac{2^{n-1}+1}{2^n}]$, then
\begin{align}\label{eq5}
(1-v) a+v b &\leq a^{1-v} b^v+(1-v)(\sqrt{a}-\sqrt{b})^{2}\notag\\
&\qquad +(2v-1)\sqrt{ab}\sum_{k=2}^n2^{k-2}\left(\sqrt[2^{k}]{\frac{b}{a}}-1\right)^{2}.
\end{align}
\item If $v \notin [\frac{2^{n-1}-1}{2^n},\frac{1}{2}]$, then
\begin{align}\label{eq6}
(1-v) a+v b &\leq a^{1-v} b^v +v(\sqrt{a}-\sqrt{b})^{2}\notag\\
&\qquad +(1-2v)\sqrt{ab}\sum_{k=2}^n2^{k-2}\left(\sqrt[2^{k}]{\frac{a}{b}}-1\right)^{2}.
\end{align}
\end{enumerate}
\end{theorem}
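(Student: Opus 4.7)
The plan is to reduce part (i) to a single application of Lemma \ref{l0} (supplemental Young) once the sum on the right of \eqref{eq5} is recognized as a telescoping expression collapsing to a two-point weighted mean. Part (ii) will then follow from part (i) by the $a\leftrightarrow b$, $v\leftrightarrow 1-v$ symmetry, exactly as in the proof of Corollary \ref{c2}.

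For part (i), I first move $(1-v)(\sqrt{a}-\sqrt{b})^{2}=(1-v)(a+b)-2(1-v)\sqrt{ab}$ onto the left-hand side of \eqref{eq5}; this collapses the left-hand side to $(2v-1)b+2(1-v)\sqrt{ab}$. Next, I treat the summands on the right. Setting $x_{k}:=\sqrt[2^{k}]{b/a}$, the identity $x_{k}^{2}=x_{k-1}$ gives $(x_{k}-1)^{2}=x_{k-1}-2x_{k}+1$. Writing $c_{k}:=\sqrt{ab}\,x_{k}=a^{1/2-1/2^{k}}b^{1/2+1/2^{k}}$, the weighted sum $\sum_{k=2}^{n}2^{k-2}\sqrt{ab}(x_{k}-1)^{2}$ telescopes, and its closed form is $c_{1}-2^{n-1}c_{n}+(2^{n-1}-1)\sqrt{ab}=b-2^{n-1}c_{n}+(2^{n-1}-1)\sqrt{ab}$.

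Plugging this back into \eqref{eq5} and cancelling the $(2v-1)b$ terms on both sides reduces the inequality to $(1-\alpha)\sqrt{ab}+\alpha\,c_{n}\leq a^{1-v}b^{v}$ with $\alpha:=(2v-1)\,2^{n-1}$. A direct calculation of exponents (using $\alpha/2^{n}=v-\tfrac{1}{2}$) shows that $(\sqrt{ab})^{1-\alpha}c_{n}^{\,\alpha}=a^{1-v}b^{v}$ exactly, so Lemma \ref{l0} applied to $A=\sqrt{ab}$, $B=c_{n}$ and weight $\alpha$ yields the desired bound as soon as $\alpha\notin[0,1]$. A short computation shows $\alpha\in[0,1]$ if and only if $v\in[\tfrac{1}{2},\tfrac{2^{n-1}+1}{2^{n}}]$, so the theorem's hypothesis is precisely the condition needed to invoke Lemma \ref{l0}.

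Part (ii) follows by applying part (i) with $a$ and $b$ swapped and with $v$ replaced by $1-v$, which interchanges the two excluded intervals for the weight and converts $\sqrt[2^{k}]{b/a}$ into $\sqrt[2^{k}]{a/b}$. The one spot that requires some care is the bookkeeping for the telescoping sum and the verification that, after the cancellations, the exponents really do combine to form $a^{1-v}b^{v}$ on the right of the reduced inequality; once those routine checks are in hand, the argument is an immediate appeal to the supplemental Young inequality.
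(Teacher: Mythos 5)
Your proposal is correct and is essentially the paper's own argument: after subtracting $(1-v)(\sqrt{a}-\sqrt{b})^2$ and telescoping the sum, both reduce \eqref{eq5} to $(1-\alpha)\sqrt{ab}+\alpha\,\sqrt{ab}\,\sqrt[2^n]{b/a}\leq a^{1-v}b^v$ with $\alpha=2^n v-2^{n-1}$ and invoke Lemma \ref{l0}, with part (ii) obtained by the same $a\leftrightarrow b$, $v\leftrightarrow 1-v$ substitution. The only difference is presentational (you package the telescoping in closed form, the paper writes out the cancellation term by term), and your bookkeeping of the exponents and of the condition $\alpha\notin[0,1]\iff v\in[\tfrac{1}{2},\tfrac{2^{n-1}+1}{2^n}]$ checks out.
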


\begin{proof} 
\textit{(i)} If $v \notin [\frac{1}{2},\frac{2^{n-1}+1}{2^n}]$, we have $(2^{n}v-2^{n-1})\notin [0,1]$ and $(2^{n-1}-2^{n}v+1)\notin [0,1]$.  Now compute
\begin{align*}
&(1-v) a+v b-(1-v)(\sqrt{a}-\sqrt{b})^{2}-(2v-1)\sqrt{ab}\left\{\sum_{k=2}^n2^{k-2}\left(\sqrt[2^{k}]{\frac{b}{a}}-1\right)^{2}\right\}\\
&=(1-v) a+v b-(1-v)(\sqrt{a}-\sqrt{b})^{2}\\
&\qquad -(2v-1)\sqrt{ab}\left\{\left(\sqrt[4]{\frac{b}{a}}-1\right)^{2}+2\left(\sqrt[8]{\frac{b}{a}}-1\right)^{2}+4\left(\sqrt[16]{\frac{b}{a}}-1\right)^{2}\right\}\\
&\qquad -\cdots -2^{n-4}(2v-1)\sqrt{ab}\left(\sqrt[2^{n-2}]{\frac{b}{a}}-1\right)^{2}\\
&\qquad -2^{n-3}(2v-1)\sqrt{ab}\left(\sqrt[2^{n-1}]{\frac{b}{a}}-1\right)^{2} -2^{n-2}(2v-1)\sqrt{ab}\left(\sqrt[2^{n}]{\frac{b}{a}}-1\right)^{2}\\
&=(1-v) a+v b-(1-v)(a-2\sqrt{ab}+b)-(2v-1)\sqrt{ab}\left(\sqrt{\frac{b}{a}}-2\sqrt[4]{\frac{b}{a}}+1\right)\\
&\qquad -2(2v-1)\sqrt{ab}\left(\sqrt[4]{\frac{b}{a}}-2\sqrt[8]{\frac{b}{a}}+1\right)-4(2v-1)\sqrt{ab}\left(\sqrt[8]{\frac{b}{a}}-2\sqrt[16]{\frac{b}{a}}+1\right)\\
&\qquad -\cdots -2^{n-4}(2v-1)\sqrt{ab}\left(\sqrt[2^{n-3}]{\frac{b}{a}}-2\sqrt[2^{n-2}]{\frac{b}{a}}+1\right)\\
&\qquad -2^{n-3}(2v-1)\sqrt{ab}\left(\sqrt[2^{n-2}]{\frac{b}{a}}-2\sqrt[2^{n-1}]{\frac{b}{a}}+1\right)\\
&\qquad -2^{n-2}(2v-1)\sqrt{ab}\left(\sqrt[2^{n-1}]{\frac{b}{a}}-2\sqrt[2^{n}]{\frac{b}{a}}+1\right)\\
&=2(1-v)\sqrt{ab}+(1-2v)\sqrt{ab}\sum_{l=0}^{n-2}2^l+2^{n-1}(2v-1)\sqrt{ab}\sqrt[2^{n}]{\frac{b}{a}}\\
&=\left\{2(1-v)+(1-2v)\sum_{l=0}^{n-2}2^l \right\}\sqrt{ab}+2^{n-1}(2v-1)\sqrt{ab}\sqrt[2^{n}]{\frac{b}{a}}\\
&=\left\{2(1-v)+(1-2v)(2^{n-1}-1) \right\}\sqrt{ab}+2^{n-1}(2v-1)\sqrt{ab}\sqrt[2^{n}]{\frac{b}{a}}\\
&=(2^{n-1}-2^{n}v+1)\sqrt{ab}+(2^{n}v-2^{n-1})\sqrt{ab}\sqrt[2^{n}]{\frac{b}{a}}\\
&\leq \left(\sqrt{ab}\right)^{(2^{n-1}-2^{n}v+1)}\left( \sqrt{ab}\sqrt[2^{n}]{\frac{b}{a}} \right)^{(2^{n}v-2^{n-1})}\: \text{( by Lemma \ref{l0})}\\
&=a^{1-v}v^{v}.
\end{align*}
 So we get the following inequality
\begin{align*}
&(1-v) a+v b-(1-v)(\sqrt{a}-\sqrt{b})^{2}-(2v-1)\sqrt{ab}\left\{\sum_{k=2}^n2^{k-2}\left(\sqrt[2^{k}]{\frac{b}{a}}-1\right)^{2}\right\}\\
& \qquad \leq a^{1-v}v^{v}
\end{align*}
which is equivalent to \eqref{eq5}.

\textit{(ii)} If  $v \notin [\frac{2^{n-1}-1}{2^n},\frac{1}{2}]$, then $(1-v)\notin [\frac{1}{2},\frac{2^{n-1}+1}{2^n}]$.  Now by changing two elements $a,b$ and replacing the weight $v$ with $(1-v)$ in \textit{(i)}, the desired inequality \eqref{eq6} is deduced.\qed
\end{proof}

\begin{remark}
We would remark that if we rewrite Theorem \ref{t2} for $n=1$, then we get Corollary \ref{c2}.
\end{remark}

\begin{remark}\label{RM}
From the equality of the proof in Theorem \ref{t2}, the inequality \eqref{eq5} is equivalent to
\begin{align}\label{ineq0002}
 a^{1-v} b^v &\geq \sqrt{ab}+2^{n}\left(v-\frac{1}{2}\right)\sqrt{ab} \left(\sqrt[2^n]{\frac{b}{a}}-1\right),
\end{align}
which gives the following inequality
\begin{align*}
 \left( \frac{b}{a} \right)^{v-\frac{1}{2}}\geq 1+2^n\left(v - \frac{1}{2} \right) \left(\sqrt[2^n]{\frac{b}{a}}-1  \right).
\end{align*}
Since $\lim_{r\to 0} \frac{t^r -1}{r} =\log t$, by putting $r=\frac{1}{2^{n}}$, we have
\begin{align*}
\lim_{n\to \infty}2^{n}\left(\sqrt[2^n]{\frac{b}{a}}-1\right) = \log \frac{b}{a}.
\end{align*}
Thus, we have the following inequality in the limit of $n \to \infty$ for the inequality \eqref{eq5} in Theorem \ref{t2}:
\begin{equation} \label{ineq0001}
\log \left( \frac{b}{a} \right)^{v-\frac{1}{2}} \leq \left( \frac{b}{a} \right)^{v-\frac{1}{2}}-1,
\end{equation}
for $a,b > 0$ and $\frac{1}{2} \neq v \in \mathbb{R}$, which comes from the condition $v \notin \left[\frac{1}{2},\frac{2^{n-1}+1}{2^n} \right]$ in the limit of $n \to \infty$. The above inequality recover the equality in the case $v =\frac{1}{2}$.
Therefore, we have the inequality \eqref{ineq0001} for all $v \in \mathbb{R}$. We notice that the inequality \eqref{ineq0001} can be proven directly by putting $x=\left(\frac{b}{a}\right)^{v-1/2}$ in the inequality 
\begin{equation} \label{fundamental_log_ineq}
\log x \leq x-1,\quad (x>0).
\end{equation}
Similarly in the limit of $n \to \infty$ for the inequality \eqref{eq6} in Theorem \ref{t2}, we get the inequality 
$$
\log \left( \frac{a}{b} \right)^{\frac{1}{2}-v} \leq \left( \frac{a}{b} \right)^{\frac{1}{2}-v}-1,
$$
 by  changing two elements $a,b$ and replacing the weight $v$ with $(1-v)$ in the inequality \eqref{ineq0001}.
\end{remark}

Next, in Remarks \ref{r1} and  \ref{r2}, we show that Theorem \ref{t2} recover the inequalities in Lemma \ref{l1}. To achieve this, we compare  Lemma \ref{l1} with Theorem \ref{t2}  in the cases such as $n=2$ and $n=3$ where $v\in[0,1]$. First, we notice that  Lemma \ref{l1} is equivalent to the following proposition.

\begin{prop}\label{p3}
Let $a,b\geq 0$ and $v\in [0,1]$.
\begin{enumerate}[(i)]
\item If  $v\in[0,\frac{1}{4}]$, then
\begin{align*}
(1-v)a+vb &\leq a^{1-v}b^{v}+(1-v)(\sqrt{a}-\sqrt{b})^{2}-2v(\sqrt{b}-\sqrt[4]{ab})^{2}.
\end{align*}
\item If $v\in[\frac{1}{4},\frac{1}{2}]$, then
\begin{align*}
(1-v)a+vb &\leq a^{1-v}b^{v}+(1-v)(\sqrt{a}-\sqrt{b})^{2}+(2v-1)(\sqrt{b}-\sqrt[4]{ab})^{2}.
\end{align*}
\item If $v\in[\frac{1}{2},\frac{3}{4}]$, then
\begin{align*}
(1-v)a+vb &\leq a^{1-v}b^{v}+v(\sqrt{a}-\sqrt{b})^{2}-(2v-1)(\sqrt{a}-\sqrt[4]{ab})^{2}.
\end{align*}
\item If $v\in[\frac{3}{4},1]$, then
\begin{align*}
(1-v)a+vb &\leq a^{1-v}b^{v}+v(\sqrt{a}-\sqrt{b})^{2}+(2v-2)(\sqrt{a}-\sqrt[4]{ab})^{2}.
\end{align*}
\end{enumerate}
\end{prop}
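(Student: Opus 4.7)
The plan is to observe that Proposition~\ref{p3} is nothing more than the explicit piecewise unfolding of the nested minimum $r_0=\min\{2r,1-2r\}$ appearing in Lemma~\ref{l1}, where $r=\min\{v,1-v\}$. Since $r\leq 1/2$ always, we have $2r\in[0,1]$, so $r_0$ is well defined, and the four subintervals $[0,\tfrac14]$, $[\tfrac14,\tfrac12]$, $[\tfrac12,\tfrac34]$, $[\tfrac34,1]$ are exactly the regions on which $r$ and $1-2r$ each take a single linear form. So my plan is to compute $r_0$ explicitly on each of these four subintervals and substitute into whichever branch of Lemma~\ref{l1} applies.

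\textbf{Step 1 ($v\in[0,\tfrac14]$).} Here $r=v$, so $2r=2v\leq\tfrac12\leq 1-2v=1-2r$, giving $r_0=2v$. Lemma~\ref{l1}(i) then reads
$$(1-v)a+vb\leq a^{1-v}b^v+(1-v)(\sqrt a-\sqrt b)^2-2v(\sqrt b-\sqrt[4]{ab})^2,$$
which is exactly Proposition~\ref{p3}(i).

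\textbf{Step 2 ($v\in[\tfrac14,\tfrac12]$).} Here $r=v$ again, but now $2r=2v\geq\tfrac12\geq 1-2v$, so $r_0=1-2v$. Substituting into Lemma~\ref{l1}(i), the term $-r_0(\sqrt b-\sqrt[4]{ab})^2$ becomes $(2v-1)(\sqrt b-\sqrt[4]{ab})^2$, yielding Proposition~\ref{p3}(ii).

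\textbf{Step 3 ($v\in[\tfrac12,\tfrac34]$).} Now $r=1-v$, so $2r=2-2v\geq\tfrac12$ and $1-2r=2v-1\leq\tfrac12$, giving $r_0=2v-1$. Substituting into Lemma~\ref{l1}(ii) directly produces Proposition~\ref{p3}(iii).

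\textbf{Step 4 ($v\in[\tfrac34,1]$).} Still $r=1-v$, but now $2r=2-2v\leq\tfrac12\leq 2v-1$, so $r_0=2-2v$. The term $-r_0(\sqrt a-\sqrt[4]{ab})^2$ in Lemma~\ref{l1}(ii) becomes $(2v-2)(\sqrt a-\sqrt[4]{ab})^2$, producing Proposition~\ref{p3}(iv).

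There is essentially no mathematical obstacle: the only thing one must be careful about is the bookkeeping of signs, since in cases (ii) and (iv) the quantity $-r_0$ is nonpositive and must be rewritten with the opposite sign, explaining why the coefficients $(2v-1)$ and $(2v-2)$ appear with a $+$ sign in Proposition~\ref{p3} rather than with a $-$ sign. No additional results beyond Lemma~\ref{l1} are needed, and the four subcases exhaust $[0,1]$ (with consistent values at the boundary points $v=\tfrac14,\tfrac12,\tfrac34$), establishing the claimed equivalence.
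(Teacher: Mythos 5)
Your proposal is correct and is exactly the argument the paper intends: Proposition \ref{p3} is presented there as the explicit piecewise restatement of Lemma \ref{l1} obtained by evaluating $r=\min\{v,1-v\}$ and $r_0=\min\{2r,1-2r\}$ on the four subintervals, which is precisely what you carried out, with the sign bookkeeping for $-r_0$ handled correctly.
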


\begin{remark}\label{r1}
Consider Theorem \ref{t2} in the case $n=2$ with $v\in [0,1]$.  For $a,b> 0$, we have the following inequalities
\begin{enumerate}[\textit{(i)}]
\item If $v\notin[\frac{1}{2},\frac{3}{4}]$, then
\begin{align}
(1-v)a+vb &\leq a^{1-v}b^{v}+(1-v)(\sqrt{a}-\sqrt{b})^{2}\notag \\
& \qquad +(2v-1)(\sqrt{b}-\sqrt[4]{ab})^{2}.\label{eq7}
\end{align}
\item If $v\notin[\frac{1}{4},\frac{1}{2}]$, then
\begin{align}
(1-v)a+vb &\leq a^{1-v}b^{v}+v(\sqrt{a}-\sqrt{b})^{2}\notag \\
& \qquad  -(2v-1)(\sqrt{a}-\sqrt[4]{ab})^{2}.\label{eq8}
\end{align}
\end{enumerate}
 In our recent paper \cite{xx}, we showed that the right-hand sides of both inequalities (\ref{eq7}) and (\ref{eq8}) give tighter upper bounds of the $v$-weighted arithmetic mean than those in Proposition \ref{p3}.
\end{remark}

\begin{remark}\label{r2}
As a direct consequence of Theorem \ref{t2} in the case $n=3$ with restricted range $v\in[0,1]$, we have
\begin{enumerate}[\textit{(i)}]
\item If $v\notin[\frac{1}{2},\frac{5}{8}]$, then
\begin{align}\label{eq9}
(1-v)a+vb &\leq a^{1-v}b^{v}+(1-v)(\sqrt{a}-\sqrt{b})^{2}+(2v-1)(\sqrt{b}-\sqrt[4]{ab})^{2}\notag \\
&\qquad +(4v-2)(\sqrt[8]{ab^{3}}-\sqrt[4]{ab})^{2}.
\end{align}
\item If $v\notin[\frac{3}{8},\frac{1}{2}]$, then
\begin{align}\label{eq10}
(1-v)a+vb &\leq a^{1-v}b^{v}+v(\sqrt{a}-\sqrt{b})^{2}-(2v-1)(\sqrt{a}-\sqrt[4]{ab})^{2}\notag \\
&\qquad -(4v-2)(\sqrt[8]{a^{3}b}-\sqrt[4]{ab})^{2}.
\end{align}
\end{enumerate}
We here give advantages of inequalities \eqref{eq9} and \eqref{eq10} in comparison with Proposition \ref{p3}.
\begin{itemize}
\item[(a)] Firstly, we compare Proposition \ref{p3} with the inequality \eqref{eq9} which holds  in the cases $v \in [0,\frac{1}{4}]$, $v \in [\frac{1}{4},\frac{1}{2}]$, $v \in [\frac{5}{8},\frac{3}{4}]$ and $v \in [\frac{3}{4},1]$ .
\begin{itemize}
\item[(a1)] In the case $v \in [0,\frac{1}{4}]$, we have $(2v-1)<(-2v)$ and $(4v-2)<0$. Indeed, the right-hand side of inequality \eqref{eq9} is less than the right-hand side of \textit{(i)} in Proposition \ref{p3}. For the case of $v \in [\frac{1}{4},\frac{1}{2}]$, we have $(4v-2)<0$. So we easily find that the right-hand side of inequality \eqref{eq9} is less than the right-hand side of \textit{(ii)} in Proposition \ref{p3}. 

\item[(a2)]For the case of $v \in [\frac{5}{8},\frac{3}{4}]$, we claim that the right-hand side of inequality \eqref{eq9} is less than or equal to the right-hand side of \textit{(iii)} in Proposition \ref{p3}. To prove our claim, we show that the following inequality holds:
\begin{align}\label{eq100}
 &(1-v)(\sqrt{a}-\sqrt{b})^{2}+(2v-1)(\sqrt{b}-\sqrt[4]{ab})^{2}+(4v-2)(\sqrt[8]{ab^{3}}-\sqrt[4]{ab})^{2} \notag \\
&\qquad \leq v(\sqrt{a}-\sqrt{b})^{2}-(2v-1)(\sqrt{a}-\sqrt[4]{ab})^{2},
\end{align}
 which is equivalent to the inequality 
\begin{equation*}
 2(1-2v) t^{1/4}\left\{3t^{1/4}-1-2t^{3/8}\right\}\geq 0,
\end{equation*}
for $t>0$ and $v \in [\frac{5}{8},\frac{3}{4}]$. To obtain this, it is enough to prove $(3t^{1/4}-1-2t^{3/8})\leq 0$. If $t^{1/8}=x$, then we easily find that $f(x)=3x^2-2x^3-1$  is increasing for $0<x<1$ and decreasing where $x>1$. Indeed, $f(x)=3x^2-2x^3-1\leq 0$ where $x>0$ and so $3t^{1/4}-1-2t^{3/8} \leq 0$.

\item[(a3)]For the case of  $v \in [\frac{3}{4},1]$, we  claim that the right-hand side of inequality \eqref{eq9} is less than or equal to the right-hand side of \textit{(iv)} in Proposition \ref{p3}. To prove our claim, we show that the following inequality holds:
\begin{align}\label{eq101}
 &(1-v)(\sqrt{a}-\sqrt{b})^{2}+(2v-1)(\sqrt{b}-\sqrt[4]{ab})^{2}+(4v-2)(\sqrt[8]{ab^{3}}-\sqrt[4]{ab})^{2}\notag \\
&\qquad \leq v(\sqrt{a}-\sqrt{b})^{2}-2(1-v)(\sqrt{a}-\sqrt[4]{ab})^{2},
\end{align}
 which is equivalent to the inequality 
\begin{equation}\label{eq11}
 (4v-3)+(3-8v)t^{1/2}+(4-4v)t^{1/4}+(8v-4)t^{5/8}\geq 0,
\end{equation}
for $t>0$ and $v \in [\frac{3}{4},1]$. To obtain the inequality \eqref{eq11}, it is sufficient to prove $f(x,v) \geq 0$ where $ x=t^{1/8} >0$ and
$$
f(x,v) \equiv (8v-4)x^5 +(3-8v) x^4 +(4-4v) x^2 +(4v-3).
$$

Since  $\frac{df(x,v)}{dv} = 4(x-1)^2 (2x^3+2x^2+2x+1) \geq 0$, we have $f(x,v) \geq f(x,\frac{3}{4}) =x^2 (x-1)^2(2x+1) \geq 0$ for $x>0$.
\end{itemize}

\item[(b)] Secondly, we compare Proposition \ref{p3} with the inequality \eqref{eq10} which holds  in the cases $v \in [0,\frac{1}{4}]$, $v \in [\frac{1}{4},\frac{3}{8}]$, $v \in [\frac{1}{2},\frac{3}{4}]$ and $v \in [\frac{3}{4},1]$.
\begin{itemize}
\item[(b1)]For the case of  $v \in [0,\frac{1}{4}]$, we claim that the right-hand side of inequality \eqref{eq10} is less than or equal to the right-hand side of \textit{(i)} in Proposition \ref{p3}. To prove our claim, we give the following inequality
\begin{align*}
 &v(\sqrt{a}-\sqrt{b})^{2}-(2v-1)(\sqrt{a}-\sqrt[4]{ab})^{2}-(4v-2)(\sqrt[8]{a^{3}b}-\sqrt[4]{ab})^{2}\\
&\qquad \leq (1-v)(\sqrt{a}-\sqrt{b})^{2}-2v(\sqrt{b}-\sqrt[4]{ab})^{2},
\end{align*}
which we get it by replacing $v$ with $(1-v)$ and changing the elements $a,b$ in the inequality \eqref{eq101} .

\item[(b2)] For the case of $v \in [\frac{1}{4},\frac{3}{8}]$, we  claim that the right-hand side of inequality \eqref{eq10} is less than or equal to the right-hand side of \textit{(ii)} in Proposition \ref{p3}. To prove our claim, we give the following inequality
\begin{align*}
 &v(\sqrt{a}-\sqrt{b})^{2}-(2v-1)(\sqrt{a}-\sqrt[4]{ab})^{2}-(4v-2)(\sqrt[8]{a^{3}b}-\sqrt[4]{ab})^{2}\\
&\qquad \leq (1-v)(\sqrt{a}-\sqrt{b})^{2}-(1-2v)(\sqrt{b}-\sqrt[4]{ab})^{2},
\end{align*}
which is deduced by replacing $v$ with $(1-v)$ and changing the elements $a,b$ in the inequality \eqref{eq100} .

\item[(b3)] For the case of $v \in [\frac{1}{2},\frac{3}{4}]$, we have $-(4v-2)<0$. So we easily find that the right-hand side of inequality \eqref{eq10} is less than the right-hand side of \textit{(iii)} in Proposition \ref{p3}. In the case $v \in [\frac{3}{4},1]$ , we have  $-(2v-1)<(2v-2)$ and $-(4v-2)<0$. That is the right-hand side of inequality \eqref{eq10} is less than the right-hand side of \textit{(iv)} in Proposition \ref{p3}.
\end{itemize}
\end{itemize}
\end{remark}

Thus,  according to Remark \ref{r1} and Remark \ref{r2}, Theorem \ref{t2} recover Lemma \ref{l1}.  We notice that the range of the reverse Young's inequalities in Theorem \ref{t2} is wider than Lemma \ref{l1}, namely Theorem \ref{t2} holds in the case $v \in \mathbb{R}$ and Lemma \ref{l1} holds for $v \in [0,1]$. \\

Next, we compare Theorem \ref{t2} with Lemma \ref{SM}. In Theorem \ref{t2}, we have (I) $v\leq 0$, (II) $0\leq v < \frac{2^{n-1}-1}{2^n}$, (III) $\frac{2^{n-1}-1}{2^n} \leq v \leq \frac{1}{2}$, (IV) $\frac{1}{2} \leq v \leq \frac{2^{n-1}+1}{2^n}$, (V) $\frac{2^{n-1}+1}{2^n} < v \leq 1$ and (VI) $v\geq 1$.\\
For the cases of (II) and (V), we claim that Theorem \ref{t2} has tighter upper bounds than those in Lemma \ref{SM}, while Lemma \ref{SM} recover Theorem \ref{t2} in the cases (III) and (IV). Therefore we conclude that Theorem \ref{t2} and  Lemma \ref{SM} are different refinements of the reverse Young's inequality which both of them recover Lemma \ref{l1}. However, we emphasize that Theorem \ref{t2} gives the reverse Young's inequality in the wider range than Lemma \ref{SM}, namely  in the case $v \in \mathbb{R}$. This justify why our refinement in Theorem \ref{t2} is better than Lemma \ref{SM}.\\
 To prove our claims, we compare Theorem \ref{t2} with Lemma \ref{SM} in the same steps such as  $n=2$. For this purpose, we list up the following corollaries which are deduced directly from Theorem \ref{t2} and Lemma \ref{SM}, respectively.
\begin{corollary}\label{c1}
 Let $a,b> 0$ and $v\in \mathbb{R}$.
\begin{enumerate}[(i)]
\item If $v\notin[\frac{1}{2},\frac{3}{4}]$, then
\begin{align}
(1-v)a+vb - a^{1-v}b^{v} &\leq (1-v)(\sqrt{a}-\sqrt{b})^{2}+(2v-1)(\sqrt{b}-\sqrt[4]{ab})^{2}.\label{eq19}
\end{align}
\item If $v\notin[\frac{1}{4},\frac{1}{2}]$, then
\begin{align}
(1-v)a+vb - a^{1-v}b^{v} &\leq v(\sqrt{a}-\sqrt{b})^{2}-(2v-1)(\sqrt{a}-\sqrt[4]{ab})^{2}.\label{eq20}
\end{align}
\end{enumerate}
\end{corollary}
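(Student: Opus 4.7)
The plan is to specialize Theorem \ref{t2} to the case $n=2$ and then match the resulting expression with the formulation given in Corollary \ref{c1}. When I substitute $n=2$ into part (i) of Theorem \ref{t2}, the forbidden interval $[\frac{1}{2},\frac{2^{n-1}+1}{2^n}]$ becomes $[\frac{1}{2},\frac{3}{4}]$, exactly as required, and the finite sum $\sum_{k=2}^{n} 2^{k-2}\bigl(\sqrt[2^{k}]{b/a}-1\bigr)^{2}$ collapses to the single term $\bigl(\sqrt[4]{b/a}-1\bigr)^{2}$. Hence Theorem \ref{t2}(i) at $n=2$ reads
$$
(1-v)a+vb \;\leq\; a^{1-v}b^{v} + (1-v)(\sqrt{a}-\sqrt{b})^{2} + (2v-1)\sqrt{ab}\bigl(\sqrt[4]{b/a}-1\bigr)^{2}.
$$

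The one non-trivial bookkeeping step is to verify the algebraic identity $\sqrt{ab}\bigl(\sqrt[4]{b/a}-1\bigr)^{2} = (\sqrt{b}-\sqrt[4]{ab})^{2}$, which converts Theorem \ref{t2} into the form displayed in Corollary \ref{c1}. Expanding the left side gives $\sqrt{ab}\sqrt{b/a} - 2\sqrt{ab}\sqrt[4]{b/a} + \sqrt{ab} = b - 2\sqrt[4]{ab^{3}} + \sqrt{ab}$, and expanding the right side gives $b - 2\sqrt{b}\sqrt[4]{ab} + \sqrt{ab} = b - 2\sqrt[4]{ab^{3}} + \sqrt{ab}$; the two expressions coincide, and (\ref{eq19}) is obtained by rearranging the above inequality.

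For part (ii), the cleanest route is to invoke Theorem \ref{t2}(ii) directly with $n=2$: the interval $[\frac{2^{n-1}-1}{2^n},\frac{1}{2}]$ becomes $[\frac{1}{4},\frac{1}{2}]$, the sum collapses to $\bigl(\sqrt[4]{a/b}-1\bigr)^{2}$, and the companion identity $\sqrt{ab}\bigl(\sqrt[4]{a/b}-1\bigr)^{2} = (\sqrt{a}-\sqrt[4]{ab})^{2}$ is established by the same expansion with the roles of $a$ and $b$ interchanged. Alternatively, (ii) can be deduced from (i) by the symmetry $a\leftrightarrow b$, $v\leftrightarrow 1-v$, which is exactly the device already used inside the proof of Theorem \ref{t2}(ii).

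I do not anticipate any real obstacle, since every ingredient is either a specialization of a result that has already been proven in full generality or a routine expansion of a perfect square. The only points demanding care are keeping the range conditions consistent with the indexing in Theorem \ref{t2} and presenting the refinement term in the compact form $(\sqrt{b}-\sqrt[4]{ab})^{2}$ (resp.\ $(\sqrt{a}-\sqrt[4]{ab})^{2}$) rather than the raw form $\sqrt{ab}\bigl(\sqrt[4]{b/a}-1\bigr)^{2}$ produced by Theorem \ref{t2}, so that Corollary \ref{c1} is directly comparable to the $n=2$ instance of Lemma \ref{SM} in the subsequent discussion.
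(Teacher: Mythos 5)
Your proposal is correct and follows exactly the route the paper intends: Corollary \ref{c1} is stated as a direct specialization of Theorem \ref{t2} to $n=2$, with the sum reducing to the single $k=2$ term and the identity $\sqrt{ab}\bigl(\sqrt[4]{b/a}-1\bigr)^{2}=(\sqrt{b}-\sqrt[4]{ab})^{2}$ (and its $a\leftrightarrow b$ companion) converting it to the displayed form, just as in Remark \ref{r1}. No gaps; the range conditions $v\notin[\frac{1}{2},\frac{3}{4}]$ and $v\notin[\frac{1}{4},\frac{1}{2}]$ already exclude $v=\frac{1}{2}$, so the hypotheses of Theorem \ref{t2} are met.
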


\begin{corollary}\label{c11}
Let $a,b> 0$ and $v\in[0,1]$.
\begin{enumerate}[(i)]
\item If $v\in [0,\frac{1}{4}]$, then
\begin{align}
(1-v)a+vb - a^{1-v}b^{v} &\leq (1-v)(\sqrt{a}-\sqrt{b})^{2}-2v(\sqrt{b}-\sqrt[4]{ab})^{2}\notag \\ &\qquad -4v(\sqrt{b}-\sqrt[8]{ab^3})^{2}.\label{eq15}
\end{align}
\item If $v\in[\frac{1}{4},\frac{1}{2}]$, then
\begin{align}
(1-v)a+vb - a^{1-v}b^{v} &\leq (1-v)(\sqrt{a}-\sqrt{b})^{2}+(2v-1)(\sqrt{b}-\sqrt[4]{ab})^{2}\notag \\ &\qquad-(4v-1)(\sqrt[8]{ab^3} -\sqrt[4]{ab})^{2}.\label{eq16}
\end{align}
\item If $v\in[\frac{1}{2},\frac{3}{4}]$, then
\begin{align}
(1-v)a+vb - a^{1-v}b^{v} &\leq v(\sqrt{a}-\sqrt{b})^{2}-(2v-1)(\sqrt{a}-\sqrt[4]{ab})^{2}\notag \\ &\qquad+(4v-3)(\sqrt[8]{ab^3} -\sqrt[4]{ab})^{2}.\label{eq17}
\end{align}
\item If $v\in[\frac{3}{4},1]$, then
\begin{align}
(1-v)a+vb - a^{1-v}b^{v} &\leq v(\sqrt{a}-\sqrt{b})^{2}+(2v-2)(\sqrt{a}-\sqrt[4]{ab})^{2}\notag \\ &\qquad-4(1-v)(\sqrt[8]{ab^3} -\sqrt{a})^{2}.\label{eq18}
\end{align}
\end{enumerate}
\end{corollary}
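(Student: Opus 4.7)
The overall plan is to specialise Lemma~\ref{SM} to $n=2$ and to evaluate the correction $S_2$ explicitly on each of the four quarter-intervals. For cases~\emph{(i)} and~\emph{(ii)} I would apply part~\emph{(i)} of Lemma~\ref{SM} with $w=2v\in[0,1]$, while for cases~\emph{(iii)} and~\emph{(iv)} I would apply part~\emph{(ii)} with $w=2(1-v)\in[0,1]$. The substitution $v\mapsto 1-v$ together with $a\leftrightarrow b$ maps the first pair to the second, so only the first two cases need to be treated in detail and the rest follow by symmetry.

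The key step is to identify the two squared-difference terms in $S_2(w,\sqrt{ab},b)$ with the radicals appearing in \eqref{eq15}--\eqref{eq16}. Because $j_1(w)=0$ throughout $[0,1)$, the $k=1$ term always collapses to $s_1(w)(\sqrt{b}-\sqrt[4]{ab})^2$ once the substitution $(a,b)\mapsto(\sqrt{ab},b)$ is carried out inside the formula $\sqrt[2^k]{b^{2^{k-1}-j_k}a^{j_k}}-\sqrt[2^k]{a^{j_k+1}b^{2^{k-1}-j_k-1}}$. For the $k=2$ term, $j_2(w)=0$ on $[0,\tfrac12)$ and $j_2(w)=1$ on $[\tfrac12,1)$, which after the same substitution produces $(\sqrt{b}-\sqrt[8]{ab^3})^2$ in case~\emph{(i)} and $(\sqrt[8]{ab^3}-\sqrt[4]{ab})^2$ in case~\emph{(ii)}. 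The weights $s_1(w)$ and $s_2(w)$, defined by $s_k(w)=(-1)^{r_k(w)}2^{k-1}w+(-1)^{r_k(w)+1}\lfloor(r_k(w)+1)/2\rfloor$, are then read off on each sub-interval and matched to the numerical coefficients $-2v,-4v,(2v-1),-(4v-1)$ in the statement. Cases~\emph{(iii)} and~\emph{(iv)} are obtained by applying the same procedure to $S_2(2(1-v),\sqrt{ab},a)$; here $(\sqrt{b}-\sqrt[8]{ab^3})^2$ is replaced by its $a$-dominated counterpart and the leading $(1-v)(\sqrt a-\sqrt b)^2$ becomes $v(\sqrt a-\sqrt b)^2$, producing the coefficients $(4v-3)$ and $-4(1-v)$ of \eqref{eq17}--\eqref{eq18}.

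The main obstacle is purely combinatorial. The quantity $r_k(w)$, hence $s_k(w)$, is piecewise constant on intervals of length $2^{-(k+1)}$, so within a single quarter of $[0,1]$ the value of $r_2$ switches at an interior dyadic breakpoint, and one has to verify that the corollary's linear-in-$v$ coefficient is the correct branch of the definition on the whole quarter rather than only on one of the two sub-pieces of length $1/8$. This amounts to a careful case split, based on the floor values $[2^{k-1}w]$ and $[2^k w]$, on each of the four quarter-intervals. Once the bookkeeping is done, no further inequality is needed: the four inequalities follow line-for-line from Lemma~\ref{SM}.
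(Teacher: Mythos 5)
Your overall strategy --- specialize Lemma~\ref{SM} to $n=2$ and read off $S_2$ on each quarter-interval --- is exactly the route the paper itself indicates (the corollary is announced as ``deduced directly from \dots\ Lemma~\ref{SM}''), and your identification of the squared terms via $j_1,j_2$ is correct. The problem is precisely the step you defer: you name the branch-switching of $r_2$ inside each quarter as ``the main obstacle'' and then assert that once the bookkeeping is done the four inequalities ``follow line-for-line.'' They do not. Carrying out the bookkeeping in case \emph{(i)}: with $w=2v$ one has $r_2(w)=[4w]=0$ only for $v<\tfrac18$, where indeed $s_2(w)=2w=4v$; but for $v\in(\tfrac18,\tfrac14]$ one has $r_2(w)=1$ and $s_2(w)=1-2w=1-4v$. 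Since $4v>1-4v$ there, the coefficient $-4v$ printed in \eqref{eq15} subtracts strictly more than the $-(1-4v)$ that Lemma~\ref{SM} supplies, so \eqref{eq15} is not a consequence of Lemma~\ref{SM} on $(\tfrac18,\tfrac14]$. The same branch problem occurs in case \emph{(ii)} on $(\tfrac38,\tfrac12]$, where Lemma~\ref{SM} yields the coefficient $-(2-4v)$ rather than $-(4v-1)$, and symmetrically in cases \emph{(iii)} and \emph{(iv)} on $[\tfrac12,\tfrac58)$ and $(\tfrac34,\tfrac78)$.

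Moreover, this gap cannot be closed by a different argument, because the statement itself fails on those subintervals: at $v=\tfrac12$ inequality \eqref{eq16} reduces to $0\le -\left(\sqrt[8]{ab^3}-\sqrt[4]{ab}\right)^2$, which is false for every $a\ne b$, and \eqref{eq15} fails numerically, e.g.\ at $v=\tfrac14$, $a=1$, $b=10^6$. What your method actually proves is the faithful $n=2$ specialization of Lemma~\ref{SM}, in which the $k=2$ coefficient must be stated piecewise on intervals of length $\tfrac18$. A further mismatch your symmetry step would have exposed: applying $v\mapsto 1-v$, $a\leftrightarrow b$ to cases \emph{(i)}--\emph{(ii)} produces the radical $\sqrt[8]{a^3b}$ in cases \emph{(iii)}--\emph{(iv)}, whereas \eqref{eq17} and \eqref{eq18} are printed with $\sqrt[8]{ab^3}$. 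So you should either restate the corollary with the correct piecewise coefficients and radicals, or record that it is erroneous as printed; as it stands, no completion of your proposal can establish it.
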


\begin{remark}
Here, we compare the upper bounds in Corollary \ref{c1} with those in Corollary \ref{c11}. Firstly, we compare Corollary \ref{c11} with the inequality (\ref{eq19}) which holds in the cases $v\in[0,\frac{1}{4}]$, $v\in[\frac{1}{4},\frac{1}{2}]$ and $v\in[\frac{3}{4},1]$.\\
For the case of $v\in[0,\frac{1}{4}]$, we can find examples such that the right-hand side of (\ref{eq19}) is tighter than that of (\ref{eq15}) in Corollary \ref{c11}. Actually, take $a=1$, $b=16$ and $v=1/8$, then the right-hand side of (\ref{eq19}) is  equal to $4.875$, while the right-hand side of (\ref{eq15}) is nearly equal to $6.2892$. Indeed,  the inequality (\ref{eq19}) can recover Corollary \ref{c11} where $v\in[0,\frac{1}{4}]$.\\
In the case $v\in[\frac{3}{4},1]$, we claim that the right-hand side of the inequality (\ref{eq19}) is less than or equal to the right-hand side of (\ref{eq18}). According to (\ref{eq101}), we have
\begin{align*}
 &(1-v)(\sqrt{a}-\sqrt{b})^{2}+(2v-1)(\sqrt{b}-\sqrt[4]{ab})^{2}\notag \\
&\qquad \leq v(\sqrt{a}-\sqrt{b})^{2}+(2v-2)(\sqrt{a}-\sqrt[4]{ab})^{2}-(4v-2)(\sqrt[8]{ab^{3}}-\sqrt[4]{ab})^{2}.
\end{align*}
So to prove our claim, we show that the following inequality holds
\begin{align*}
(2-4v)(\sqrt[8]{ab^{3}}-\sqrt[4]{ab})^{2}\leq (4v-4)(\sqrt[8]{ab^{3}}-\sqrt{a})^{2},
\end{align*}
which is equivalent to 
\begin{align*}
g(x,v)=(4v-2)x^6+(2-4v)x^5+(2v-1)x^4+(2-4v)x^3+(2v-1)\geq 0,
\end{align*}
where $x=t^{\frac{1}{8}}>0$ and $v\in[\frac{3}{4},1]$. Since $\frac{dg(x,v)}{dv}=2(x-1)^2(2x^4+2x^3+3x^2+2x+1)\geq 0$, we have $g(x,v)\geq g(x,\frac{3}{4})=(x-1)^2(x^4+x^3+\frac{3}{2}x^2+x+\frac{1}{2})\geq 0$. This justify that the inequality (\ref{eq19}) also recover Corollary \ref{c11} where $v\in[\frac{3}{4},1]$. However, in the case  $v\in[\frac{1}{4},\frac{1}{2}]$, we easily find that the right-hand side of (\ref{eq16}) is less than or equal to the right-hand side of (\ref{eq19}). That is Corollary \ref{c11} is a refinement of (\ref{eq19}) where $v\in[\frac{1}{4},\frac{1}{2}]$.\\
Secondly, we compare Corollary \ref{c11} with the inequality (\ref{eq20}) which holds in the cases $v\in[0,\frac{1}{4}]$, $v\in[\frac{1}{2},\frac{3}{4}]$ and $v\in[\frac{3}{4},1]$. The comparison is done by the same way as in the first step, and we omit it. 
\end{remark}

Sababheh and Choi gave the following refinement of Lemma \ref{l0} which it's complete proof can be found in \cite[Theorem 2.2]{SM}.

\begin{lemma}\textsc{( \cite[Theorem 2.9]{SC})}\label{t00}
 Let $a,b>0$. Then, we have
 \begin{enumerate}[(i)]
 \item If $v\leq 0$, then
 \begin{align}\label{SC1}
(1-v) a+v b \leq a^{1-v} b^v + v \sum_{k=1}^n 2^{k-1} \left( \sqrt{a} -\sqrt[2^k]{a^{2^{k-1}-1}b}\right)^2.
\end{align}
 \item If $v\geq 1$, then
\begin{align}\label{SC2}
(1-v) a+v b \leq a^{1-v} b^v + (1-v) \sum_{k=1}^n 2^{k-1} \left( \sqrt{b} -\sqrt[2^k]{ab^{2^{k-1}-1}}\right)^2.
\end{align}
\end{enumerate}
\end{lemma}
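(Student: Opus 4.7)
My plan is to reduce both parts to a single application of Lemma~\ref{l0} after rescaling and a telescoping computation. I would first prove part (i) directly and then obtain part (ii) by the symmetry $a \leftrightarrow b$, $v \leftrightarrow 1-v$ used repeatedly in this paper.

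For (i), set $t = b/a$ and note that $\sqrt[2^k]{a^{2^{k-1}-1}b} = \sqrt{a}\,t^{1/2^k}$, so each refining term simplifies to
\begin{equation*}
\left(\sqrt{a}-\sqrt[2^k]{a^{2^{k-1}-1}b}\right)^{2} = a\bigl(1-t^{1/2^k}\bigr)^{2}.
\end{equation*}
After dividing through by $a$, the inequality to prove becomes
\begin{equation*}
(1-v) + vt - v\sum_{k=1}^{n} 2^{k-1}\bigl(1-t^{1/2^k}\bigr)^{2} \leq t^{v}.
\end{equation*}
The next step is to put the sum into closed form. Expanding $(1-t^{1/2^k})^{2} = 1 - 2t^{1/2^k} + t^{1/2^{k-1}}$ and shifting indices, the cross terms telescope against the square terms and one obtains the identity
\begin{equation*}
\sum_{k=1}^{n} 2^{k-1}\bigl(1-t^{1/2^k}\bigr)^{2} = (t-1) + 2^{n}\bigl(1-t^{1/2^n}\bigr).
\end{equation*}
Plugging this back in, the linear terms in $t$ cancel and the inequality collapses to
\begin{equation*}
1 + 2^{n} v\bigl(t^{1/2^n}-1\bigr) \leq t^{v}.
\end{equation*}

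Finally, set $s = t^{1/2^n}$ and $w = 2^{n} v$, so $t^{v} = s^{w}$; the inequality reads $(1-w)\cdot 1 + w\cdot s \leq 1^{1-w} s^{w}$. Since $v\leq 0$ forces $w=2^{n}v\leq 0$, we have $w\notin[0,1]$, and Lemma~\ref{l0} applied with the pair $(1,s)$ and weight $w$ delivers exactly this estimate. For (ii), apply (i) with $a$ and $b$ interchanged and $v$ replaced by $1-v$: the hypothesis $v\geq 1$ becomes $1-v\leq 0$, the factor $v$ in front of the sum turns into $1-v$, and the expression $a^{2^{k-1}-1}b$ turns into $b^{2^{k-1}-1}a$, matching \eqref{SC2} on the nose. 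The only real obstacle is the telescoping identity for the sum; once the squares are expanded and the indices are carefully re-aligned, the two cross-term series differ only by the endpoint contributions $t$ and $-2^{n}t^{1/2^n}$, after which everything collapses to the supplemental form of Young's inequality.
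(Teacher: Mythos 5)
Your argument is correct: the simplification $\sqrt[2^k]{a^{2^{k-1}-1}b}=\sqrt{a}\,t^{1/2^k}$, the telescoping identity $\sum_{k=1}^{n}2^{k-1}\bigl(1-t^{1/2^k}\bigr)^{2}=(t-1)+2^{n}\bigl(1-t^{1/2^n}\bigr)$, and the final reduction to $(1-w)+ws\le s^{w}$ with $s=t^{1/2^n}$, $w=2^{n}v$ all check out, and the $a\leftrightarrow b$, $v\leftrightarrow 1-v$ step for (ii) is the standard one. Note that the paper does not prove this lemma itself --- it quotes it from \cite{SC}, pointing to \cite{SM} for a proof --- but your route is exactly the expand-and-telescope-then-apply-Lemma~\ref{l0} mechanism the paper uses to prove Theorem~\ref{t2} and says can be reused for Theorem~\ref{t11}; indeed Remark~\ref{RM2} records precisely your collapsed form $a+va2^{n}\left((b/a)^{1/2^n}-1\right)\le a^{1-v}b^{v}$ as equivalent to \eqref{SC1}. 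In fact your proof only needs $w=2^{n}v\notin[0,1]$, so it actually establishes the wider range of Theorem~\ref{t11} rather than just $v\le 0$ or $v\ge 1$; the one pedantic point is the endpoint $v=0$ (where $w=0$ is not covered by Lemma~\ref{l0} as stated), but there both sides of \eqref{SC1} coincide, so the inequality holds trivially.
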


We can extend the ranges of $v$ in Lemma \ref{t00} to those in following theorem,
by the similar way to the line of  proof of Theorem \ref{t2}.

\begin{theorem}\label{t11}
Let $a,b >0$, $n \in \mathbb{N}$ and $v \in \mathbb{R}$.
\begin{enumerate}[(i)]
\item If $v \notin [0,\frac{1}{2^n}]$, then
the inequality \eqref{SC1} holds.

\item If $v \notin [\frac{2^{n}-1}{2^n},1]$, then
the inequality \eqref{SC2} holds.
\end{enumerate}
\end{theorem}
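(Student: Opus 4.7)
The plan is to reduce both parts of Theorem \ref{t11} to a single application of Lemma \ref{l0} via a telescoping identity, exactly in the spirit of the proof of Theorem \ref{t2}. Starting with part (i), I first rewrite the general summand in a friendlier form. Using $\sqrt[2^k]{a^{2^{k-1}-1}b} = a^{1/2 - 1/2^k}b^{1/2^k} = \sqrt{a}\,(b/a)^{1/2^k}$ and setting $t := b/a$, one obtains
$$\bigl(\sqrt{a} - \sqrt[2^k]{a^{2^{k-1}-1}b}\bigr)^2 = a\bigl(1 - 2 t^{1/2^k} + t^{1/2^{k-1}}\bigr).$$

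Next I would carry out the telescoping. Multiplying by $2^{k-1}$ and summing for $k=1,\dots,n$, the middle terms $-2^{k}\,t^{1/2^k}$ cancel against the last terms $2^{k-1}\,t^{1/2^{k-1}}$ after the re-indexing $j = k-1$; only $t$ (from $j=0$) and $-2^{n}\,t^{1/2^{n}}$ (from $k=n$) survive. Since the constant part contributes $\sum_{k=1}^n 2^{k-1} = 2^n - 1$, multiplying through by $a$ yields
$$\sum_{k=1}^n 2^{k-1}\bigl(\sqrt{a}-\sqrt[2^k]{a^{2^{k-1}-1}b}\bigr)^2 = (2^{n}-1)\,a + b - 2^{n}\,a^{1-1/2^{n}}b^{1/2^{n}}.$$
Substituting this into \eqref{SC1} and cancelling the term $vb$, the claimed inequality becomes equivalent to
$$(1 - 2^{n} v)\,a + 2^{n} v \cdot a^{1-1/2^{n}}b^{1/2^{n}} \leq a^{1-v}b^{v}.$$
Setting $w := 2^{n} v$, one checks $a^{1-w}\bigl(a^{1-1/2^{n}}b^{1/2^{n}}\bigr)^{w} = a^{1-v}b^{v}$, so this is precisely the reverse Young inequality with weight $w$ on $a$ and $a^{1-1/2^{n}}b^{1/2^{n}}$. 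Lemma \ref{l0} then delivers the inequality exactly when $w \notin [0,1]$, which is the hypothesis $v \notin [0, 1/2^{n}]$.

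For part (ii), I would invoke (i) with $v$ replaced by $1-v$ and the roles of $a$ and $b$ interchanged: the right-hand side of \eqref{SC1} transforms into that of \eqref{SC2}, and the condition $1-v \notin [0,1/2^{n}]$ becomes $v \notin [(2^{n}-1)/2^{n}, 1]$. The main technical obstacle is the bookkeeping in the telescoping identity above: several shifted sums with doubling exponents must be aligned, and one must check both the cancellation of intermediate terms and the correct boundary contributions at $k=1$ and $k=n$. Once this identity is confirmed, the remainder of the argument is a direct substitution followed by a single invocation of Lemma \ref{l0}.
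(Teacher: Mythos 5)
Your proof is correct, and it follows exactly the route the paper intends: the paper omits an explicit proof of Theorem \ref{t11}, stating only that it goes "by the similar way to the line of proof of Theorem \ref{t2}," which is precisely your telescoping reduction to the two-term expression $(1-2^n v)a + 2^n v\, a^{1-1/2^n}b^{1/2^n}$ followed by a single application of Lemma \ref{l0} with weight $w=2^n v$. The telescoping identity, the boundary terms, and the range condition $w\notin[0,1]\Leftrightarrow v\notin[0,1/2^n]$ all check out, as does the symmetry argument for part (ii).
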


As we discussed the case of $n \to \infty$ in Remark \ref{RM},
we also give the following remark.
\begin{remark}\label{RM2}
The inequality (\ref{SC1}) is equivalent to the inequality
$$
a+v a 2^n\left( \left( \frac{b}{a}\right)^{1/2^n} -1\right) \leq a^{1-v} b^v
$$
by the elementary computations. Using the formula $\lim_{r\to 0} \frac{t^r-1}{r} = \log t$, we have the inequality
$$
\log\left( \frac{b}{a} \right)^v \leq \left( \frac{b}{a} \right)^v -1
$$
for $v \neq 0$ in the limit of $n \to \infty$. The above inequality trivially holds for all $v \in \mathbb{R}$. It can be also obtained by the inequality (\ref{fundamental_log_ineq}).

Similarly, the inequality (\ref{SC2}) is equivalent to the inequality
$$
b+(1-v)b2^n \left( \left( \frac{a}{b}\right)^{1/2^n} -1\right) \leq a^{1-v} b^v
$$
so that we have  the inequality
$$
\log\left( \frac{a}{b} \right)^{1-v }\leq \left( \frac{a}{b} \right)^{1-v} -1
$$
for $v \neq 1$ in the limit of $n \to \infty$. The above inequality trivially holds for all $v \in \mathbb{R}$. It can be also obtained by the inequality (\ref{fundamental_log_ineq}).
\end{remark}

As a direct consequence of Theorems \ref{t2} and \ref{t11}, we have the following  reverse inequalities with respect to the Heinz means.

\begin{corollary}\label{c00}
 Let $a,b > 0$, $n \in \mathbb{N}$ such that $n\geq 2$ and $\frac{1}{2}\neq v\in \mathbb{R}$.
\begin{enumerate}[(i)]
\item If $v \notin [\frac{1}{2},\frac{2^{n-1}+1}{2^n}]$, then
\begin{align*}
\frac{a+b}{2} &\leq H_{v}(a,b)+(1-v)(\sqrt{a}-\sqrt{b})^{2}\notag\\
&\qquad +\left(v-\frac{1}{2}\right)\sqrt{ab}\sum_{k=2}^n2^{k-2}\left\{\left(\sqrt[2^{k}]{\frac{a}{b}}-1\right)^{2}+\left(\sqrt[2^{k}]{\frac{b}{a}}-1\right)^{2}  \right\}
\end{align*}

\item If $v \notin [\frac{2^{n-1}-1}{2^n},\frac{1}{2}]$, then
\begin{align*}
\frac{a+b}{2} &\leq H_{v}(a,b)+v(\sqrt{a}-\sqrt{b})^{2}\notag\\
&\qquad +\left(\frac{1}{2}-v\right)\sqrt{ab}\sum_{k=2}^n2^{k-2}\left\{\left(\sqrt[2^{k}]{\frac{a}{b}}-1\right)^{2}+\left(\sqrt[2^{k}]{\frac{b}{a}}-1\right)^{2}  \right\}.
\end{align*}
\end{enumerate}
\end{corollary}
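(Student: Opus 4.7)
The plan is to symmetrize the reverse Young inequality from Theorem \ref{t2} in the variables $a$ and $b$, exploiting the observation that
$$H_v(a,b) = \frac{a^{1-v}b^v + a^v b^{1-v}}{2}$$
is precisely the arithmetic mean of the two terms obtained by swapping $a \leftrightarrow b$ in $a^{1-v}b^v$. Crucially, the range restrictions on $v$ in Theorem \ref{t2} involve only $v$ and not $a,b$, so both the original and the swapped inequality are available simultaneously.

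For part (i), I would first apply Theorem \ref{t2}(i) under the hypothesis $v \notin [\tfrac{1}{2}, \tfrac{2^{n-1}+1}{2^n}]$ to obtain
$$(1-v)a + vb \leq a^{1-v}b^v + (1-v)(\sqrt{a}-\sqrt{b})^2 + (2v-1)\sqrt{ab}\sum_{k=2}^n 2^{k-2}\left(\sqrt[2^k]{\tfrac{b}{a}}-1\right)^2.$$
Since the range condition is unchanged by swapping $a$ and $b$, I would then apply Theorem \ref{t2}(i) a second time with $a$ and $b$ interchanged, yielding
$$(1-v)b + va \leq a^v b^{1-v} + (1-v)(\sqrt{a}-\sqrt{b})^2 + (2v-1)\sqrt{ab}\sum_{k=2}^n 2^{k-2}\left(\sqrt[2^k]{\tfrac{a}{b}}-1\right)^2,$$
using that $(\sqrt{a}-\sqrt{b})^2$ and $\sqrt{ab}$ are symmetric in $a,b$.

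Adding these two inequalities collapses the left-hand side to $a+b$, and dividing through by $2$ produces exactly the claimed bound: the two exponential terms average to $H_v(a,b)$, the $(1-v)(\sqrt{a}-\sqrt{b})^2$ term is unchanged, the coefficient $(2v-1)$ becomes $v - \tfrac{1}{2}$, and the two sums combine into the symmetric sum appearing in the statement. Part (ii) follows by the identical procedure, applying Theorem \ref{t2}(ii) twice (once directly, once after swapping $a$ and $b$) under the hypothesis $v \notin [\tfrac{2^{n-1}-1}{2^n}, \tfrac{1}{2}]$, and then averaging. There is no substantive obstacle: the entire argument is a direct symmetrization, and the only thing to verify is the invariance of the $v$-range under the swap of $a$ and $b$, which is immediate since the ranges are stated purely in terms of $v$.
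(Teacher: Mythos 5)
Your symmetrization argument is correct and is exactly how the paper intends the corollary to follow from Theorem \ref{t2} (the paper simply states it as a ``direct consequence'' without writing out the averaging). Applying Theorem \ref{t2}(i) to the pair $(a,b)$ and again to $(b,a)$ — valid since the range condition involves only $v$ — and averaging yields precisely the stated bound, and the same works for part (ii).
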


\begin{corollary}\label{c0011}
 Let $a,b > 0$, $n \in \mathbb{N}$ and $v \in \mathbb{R}$.
\begin{enumerate}[(i)]
\item If $v \notin [0,\frac{1}{2^n}]$, then
\begin{align*}
\frac{a+b}{2} &\leq H_{v}(a,b)\\
&\qquad + v \sum_{k=1}^n 2^{k-2} \left\lbrace \left( \sqrt{a} -\sqrt[2^k]{a^{2^{k-1}-1}b}\right)^2+\left( \sqrt{b} -\sqrt[2^k]{ab^{2^{k-1}-1}}\right)^2\right\rbrace .
\end{align*}

\item If $v \notin [\frac{2^{n}-1}{2^n},1]$, then
\begin{align*}
\frac{a+b}{2} &\leq H_{v}(a,b)\\
&\,\,+ (1-v) \sum_{k=1}^n 2^{k-2} \left\lbrace \left( \sqrt{a} -\sqrt[2^k]{a^{2^{k-1}-1}b}\right)^2+\left( \sqrt{b} -\sqrt[2^k]{ab^{2^{k-1}-1}}\right)^2\right\rbrace .
\end{align*}
\end{enumerate}
\end{corollary}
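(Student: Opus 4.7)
The plan is to reduce Corollary \ref{c0011} to Theorem \ref{t11} by splitting the arithmetic mean symmetrically. The key observation is the algebraic identity
\[
\frac{a+b}{2}=\frac{1}{2}\bigl[(1-v)a+vb\bigr]+\frac{1}{2}\bigl[va+(1-v)b\bigr],
\]
together with the corresponding decomposition $H_v(a,b)=\tfrac{1}{2}(a^{1-v}b^{v})+\tfrac{1}{2}(a^{v}b^{1-v})$ coming straight from the definition of the Heinz mean. So bounding $\tfrac{a+b}{2}-H_v(a,b)$ term-by-term reduces the problem to bounding $(1-v)a+vb-a^{1-v}b^{v}$ and $va+(1-v)b-a^{v}b^{1-v}$ separately, which is exactly what Theorem \ref{t11} does (the second estimate being obtained by applying Theorem \ref{t11} with the roles of $a$ and $b$ interchanged).

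For part (i), I would assume $v\notin[0,\tfrac{1}{2^n}]$ and first apply Theorem \ref{t11}(i) to the triple $(a,b,v)$ to obtain
\[
(1-v)a+vb\leq a^{1-v}b^{v}+v\sum_{k=1}^{n}2^{k-1}\Bigl(\sqrt{a}-\sqrt[2^k]{a^{2^{k-1}-1}b}\Bigr)^{2}.
\]
Then I would apply Theorem \ref{t11}(i) to the triple $(b,a,v)$; since the hypothesis on $v$ is symmetric in $a,b$, this is legitimate under the same assumption and yields
\[
va+(1-v)b=(1-v)b+va\leq a^{v}b^{1-v}+v\sum_{k=1}^{n}2^{k-1}\Bigl(\sqrt{b}-\sqrt[2^k]{ab^{2^{k-1}-1}}\Bigr)^{2}.
\]
Averaging the two inequalities and noting that the factor $\tfrac{1}{2}\cdot 2^{k-1}=2^{k-2}$ matches the coefficient in the statement, I would obtain the desired conclusion.

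For part (ii), the strategy is identical: under $v\notin[\tfrac{2^n-1}{2^n},1]$, apply Theorem \ref{t11}(ii) to $(a,b,v)$ and then to $(b,a,v)$, and average. The swap $a\leftrightarrow b$ interchanges the two squared-difference expressions inside the braces, so after averaging the symmetrization produces exactly the bracketed sum in the statement, with the coefficient $(1-v)$ inherited unchanged from Theorem \ref{t11}(ii).

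There is essentially no obstacle beyond careful bookkeeping of indices and making sure the hypotheses on $v$ are preserved under the $a\leftrightarrow b$ swap (which they are, since the intervals $[0,\tfrac{1}{2^n}]$ and $[\tfrac{2^n-1}{2^n},1]$ depend only on $v$ and $n$, not on $a,b$). The only mild subtlety is verifying that the two surd expressions produced by the swap, namely $\sqrt[2^k]{b^{2^{k-1}-1}a}$ and $\sqrt[2^k]{ab^{2^{k-1}-1}}$, are literally the same symbol up to commuting factors under the radical, so that the output of the argument matches the Corollary as printed.
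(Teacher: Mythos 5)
Your proposal is correct and is exactly the intended argument: the paper states Corollary \ref{c0011} as a direct consequence of Theorem \ref{t11} without writing out the proof, and the implicit proof is precisely your symmetrization, i.e.\ applying Theorem \ref{t11} to $(a,b,v)$ and to $(b,a,v)$ (legitimate since the hypothesis on $v$ does not involve $a,b$) and averaging, which turns the coefficient $2^{k-1}$ into $2^{k-2}$ and produces the symmetric bracketed sum. All the bookkeeping you flag (the $a\leftrightarrow b$ swap of the surds and the identity $\frac{1}{2}[(1-v)a+vb]+\frac{1}{2}[va+(1-v)b]=\frac{a+b}{2}$) checks out.
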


\section{Generalized Reverse Young and Heinz Inequalities for Operators}
In this section by applying Kubo--Ando theory \cite{nn} and thanks to Theorems \ref{t2} and  \ref{t11}, we have the following operator inequalities.

\begin{theorem}\label{t6}
Let $A, B\in B^{++}(H)$,  $n \in \mathbb{N}$ such that $n\geq 2$ and $\frac{1}{2}\neq v\in \mathbb{R}$. Then, we have the following inequalities.
\begin{enumerate}[(i)]
\item If $v \notin [\frac{1}{2},\frac{2^{n-1}+1}{2^n}]$, then
\begin{align*}
A\nabla_{v} B &\leq A\natural_{v} B+2(1-v)(A\nabla_{v} B-A\sharp B)\\
&\qquad +(2v-1)\sum_{k=2}^{n}2^{k-2}\left( A\sharp B-2A\sharp_{\frac{2^{k-1}+1}{2^k}}B+A\sharp_{\frac{2^{k-2}+1}{2^{k-1}}}B\right).
\end{align*}

\item If $v \notin [\frac{2^{n-1}-1}{2^n},\frac{1}{2}]$, then
\begin{align*}
A\nabla_{v} B &\leq A\natural_{v} B+2v(A\nabla_{v} B-A\sharp B)\\
&\qquad +(1-2v)\sum_{k=2}^{n}2^{k-2}\left( A\sharp B-2A\sharp_{\frac{2^{k-1}-1}{2^k}}B+A\sharp_{\frac{2^{k-2}-1}{2^{k-1}}}B\right).
\end{align*}
\end{enumerate}
\end{theorem}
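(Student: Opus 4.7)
The plan is to lift Theorem \ref{t2} to the operator setting via the Kubo--Ando functional calculus. First, I would specialize inequality \eqref{eq5} at $a=1$ and $b=t>0$, which, under the hypothesis $v\notin\bigl[\tfrac12,\tfrac{2^{n-1}+1}{2^n}\bigr]$, takes the form
\begin{equation*}
(1-v)+vt \leq t^v + (1-v)(1-\sqrt t)^2 + (2v-1)\sqrt t\sum_{k=2}^n 2^{k-2}\bigl(t^{1/2^k}-1\bigr)^2
\end{equation*}
for every $t>0$. Both sides being continuous functions of $t$ on $(0,\infty)$, the inequality extends by the continuous functional calculus to any $X\in B^{++}(H)$ substituted for $t$. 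Taking $X = A^{-1/2}BA^{-1/2}$ and sandwiching both sides by $A^{1/2}$ then converts the scalar inequality into the desired operator inequality.

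The key identifications I would use are the standard ones: $A^{1/2}\bigl((1-v)+vX\bigr)A^{1/2}=A\nabla_v B$, $A^{1/2}X^v A^{1/2}=A\natural_v B$, and $A^{1/2}X^{1/2}A^{1/2}=A\sharp B$. The middle scalar term $(1-v)(1-\sqrt t)^2 = (1-v)(1-2\sqrt t + t)$ therefore lifts to $(1-v)(A+B-2A\sharp B) = 2(1-v)(A\nabla B - A\sharp B)$, producing the second term in the statement. For the summation, expanding
\begin{equation*}
\sqrt t\bigl(t^{1/2^k}-1\bigr)^2 = t^{(2^{k-2}+1)/2^{k-1}} - 2t^{(2^{k-1}+1)/2^k} + t^{1/2}
\end{equation*}
and lifting term by term gives $A\sharp_{(2^{k-2}+1)/2^{k-1}}B - 2A\sharp_{(2^{k-1}+1)/2^k}B + A\sharp B$, which matches the $k$-th summand. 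Assembling these pieces yields (i).

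Part (ii) I would obtain either by repeating this procedure with the scalar inequality \eqref{eq6}, or, more economically, from part (i) by the symmetry $A \leftrightarrow B$ and $v \leftrightarrow 1-v$: this substitution sends $A\nabla_v B$ to itself, leaves $A\sharp B$ fixed, and sends $A\natural_v B$ to $B\natural_{1-v}A=A\natural_v B$, while the indices $(2^{k-2}+1)/2^{k-1}$ and $(2^{k-1}+1)/2^k$ pass to $(2^{k-2}-1)/2^{k-1}$ and $(2^{k-1}-1)/2^k$, and the coefficient $(2v-1)$ becomes $(1-2v)$, exactly as required.

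The main obstacle I anticipate is purely bookkeeping: one must confirm that $\tfrac12+\tfrac1{2^{k-1}} = \tfrac{2^{k-2}+1}{2^{k-1}}$ and $\tfrac12+\tfrac1{2^k} = \tfrac{2^{k-1}+1}{2^k}$ produce the advertised weights with the correct multiplicities $2^{k-2}$ and correct signs, and likewise for their $\tfrac12-\tfrac1{2^{k-1}}$ counterparts in (ii). Once this matching is verified, the theorem is a direct image of the scalar Theorem \ref{t2} under the order-preserving continuous functional calculus followed by congruence by $A^{1/2}$, and no further analytic input is required.
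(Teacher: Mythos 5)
Your proof is correct and follows exactly the paper's own argument: substitute $a=1$, $b=t$ in inequality \eqref{eq5} (resp. \eqref{eq6}), apply the continuous functional calculus to $X=A^{-1/2}BA^{-1/2}$, and conjugate by $A^{1/2}$, with the same term-by-term identifications of $t^v$, $\sqrt{t}$ and $t^{1/2+1/2^k}$ with $A\natural_v B$, $A\sharp B$ and $A\sharp_{(2^{k-1}+1)/2^k}B$. One remark: your (correct) computation of the middle term yields $2(1-v)(A\nabla B-A\sharp B)$ with the \emph{unweighted} arithmetic mean, consistent with Corollary \ref{c3} and with the paper's own proof, so the $\nabla_v$ appearing in the displayed statement of Theorem \ref{t6} should be read as $\nabla$.
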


\begin{proof}

\textit{(i)} According to the inequality  \eqref{eq5}, the following inequality holds for $t\geq 0$
\begin{align*}
(1-v)+v t &\leq  t^v+(1-v)(1-\sqrt{t})^{2}\notag\\
&\qquad +(2v-1)\sqrt{t}\sum_{k=2}^n2^{k-2}\left(\sqrt[2^{k}]{t}-1\right)^{2}.
\end{align*}
By functional calculus, if we replace $t$ with  $A^{-\frac{1}{2}}BA^{-\frac{1}{2}}$ and then multiplying both sides of the inequality by $A^{\frac{1}{2}}$, the desired inequality is obtained.

\textit{(ii)} The line of proof is similar to \textit{(i)} by applying  the inequality  \eqref{eq6}.\qed
\end{proof}

\begin{remark}
 We notice that Theorem \ref{t6} with $v\in[0,1]$,  recover the inequalities obtained in \cite[Theorem 3]{xx}, if we put $n=2$.
\end{remark}

\begin{theorem}\label{t6-6}
Let $A, B\in B^{++}(H)$, $n \in \mathbb{N}$ and $v \in \mathbb{R}$. Then, we have the following inequalities.
\begin{enumerate}[(i)]
\item  If $v \notin [0,\frac{1}{2^n}]$, then
\begin{align*}
A\nabla_{v} B &\leq A\natural_{v} B +v \sum_{k=1}^n 2^{k-1} \left( A-2A\sharp_{\frac{1}{2^k}}B+A\sharp_{\frac{1}{2^{k-1}}}B\right).
\end{align*}

\item If $v \notin [\frac{2^{n}-1}{2^n},1]$, then
\begin{align*}
A\nabla_{v} B &\leq A\natural_{v} B +(1-v) \sum_{k=1}^n 2^{k-1} \left( B-2A\sharp_{\frac{2^k-1}{2^k}}B+A\sharp_{\frac{2^{k-1}-1}{2^{k-1}}}B\right).
\end{align*}
\end{enumerate}
\end{theorem}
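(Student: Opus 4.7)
The plan is to mimic exactly the functional-calculus argument used in the proof of Theorem \ref{t6}, but feeding in Theorem \ref{t11} as the scalar input instead of Theorem \ref{t2}. The workflow has three steps: first, normalize the scalar inequality by setting $a=1$, $b=t$ to obtain a one-variable statement valid for all $t>0$; second, expand each squared difference so that it is displayed as a combination of pure powers $t^{\alpha}$; third, apply functional calculus with $t \mapsto X := A^{-\frac{1}{2}} B A^{-\frac{1}{2}} \in B^{++}(H)$ and sandwich both sides by $A^{\frac{1}{2}}$. The key identity is $A^{\frac{1}{2}} X^{r} A^{\frac{1}{2}} = A \natural_r B$ for every $r \in \mathbb{R}$, which specializes to $A$, $B$, and $A \sharp_r B$ (for $r \in [0,1]$) at the exponents that actually appear in the expansion.

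For part \textit{(i)}, inserting $a=1$, $b=t$ into \eqref{SC1} yields, for $v \notin [0,1/2^n]$ and $t > 0$, the scalar inequality
\begin{equation*}
(1-v) + v t \leq t^v + v \sum_{k=1}^{n} 2^{k-1}\bigl(1 - t^{1/2^k}\bigr)^2.
\end{equation*}
Expanding $(1 - t^{1/2^k})^2 = 1 - 2t^{1/2^k} + t^{1/2^{k-1}}$ and transferring by the functional calculus converts the three summands into $A$, $A\sharp_{1/2^k} B$, and $A\sharp_{1/2^{k-1}} B$ respectively, producing exactly the bracket in the statement. Part \textit{(ii)} follows identically from \eqref{SC2}; the only arithmetic to verify is the exponent identity $\tfrac{1}{2} + \tfrac{2^{k-1}-1}{2^k} = \tfrac{2^k-1}{2^k}$, which gives
\begin{equation*}
\bigl(\sqrt{t} - t^{(2^{k-1}-1)/2^k}\bigr)^2 = t - 2 t^{(2^k-1)/2^k} + t^{(2^{k-1}-1)/2^{k-1}},
\end{equation*}
so that functional calculus rewrites the bracket as $B - 2 A\sharp_{(2^k-1)/2^k} B + A\sharp_{(2^{k-1}-1)/2^{k-1}} B$, matching the statement.

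There is no genuine obstacle: once Theorem \ref{t11} is granted, the operator version is a purely mechanical consequence of the Kubo--Ando functional calculus together with the elementary exponent bookkeeping above. The only mild subtlety worth recording is that the indices appearing in the second summand lie in $[0,1]$ throughout (namely $1/2^k \in (0,1/2]$ and $1/2^{k-1} \in (0,1]$ in case \textit{(i)}, and $(2^k-1)/2^k \in [1/2,1)$ and $(2^{k-1}-1)/2^{k-1} \in [0,1/2)$ in case \textit{(ii)}), so the $\natural$ and $\sharp$ notations agree on all relevant terms and the inequality is expressed using the classical weighted operator geometric mean, with the generalized $\natural$ only needed for the term $A\natural_v B$ whose weight $v$ is allowed to range over all of $\mathbb{R}$.
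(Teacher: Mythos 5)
Your proposal is correct and is essentially the paper's own proof: the paper likewise specializes the scalar inequality of Theorem \ref{t11} to $(1-v)+vt \leq t^v + v\sum_{k=1}^n 2^{k-1}(1-2t^{1/2^k}+t^{1/2^{k-1}})$, substitutes $t \mapsto A^{-\frac{1}{2}}BA^{-\frac{1}{2}}$ by functional calculus, and multiplies by $A^{\frac{1}{2}}$ on both sides, treating part \textit{(ii)} symmetrically. Your exponent bookkeeping for part \textit{(ii)} and the remark that all geometric-mean weights other than $v$ itself stay in $[0,1]$ are accurate and only make explicit what the paper leaves implicit.
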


\begin{proof}
\textit{(i)} According to Theorem \ref{t11} in the case $v \notin [0,\frac{1}{2^n}]$, the following inequality holds for $t\geq 0$
\begin{align*}
(1-v)+v t &\leq t^v + v \sum_{k=1}^n 2^{k-1} \left( 1 -\sqrt[2^k]{t}\right)^2\\
&= t^v + v \sum_{k=1}^n 2^{k-1} \left( 1-2t^{\frac{1}{2^k}} +t^{\frac{1}{2^{k-1}}}\right).
\end{align*}
By functional calculus, if we replace $t$ with  $A^{-\frac{1}{2}}BA^{-\frac{1}{2}}$ and then multiplying both sides of the inequality by $A^{\frac{1}{2}}$, the desired inequality is deduced.

\textit{(ii)} By applying Theorem \ref{t11} for the case of $v \notin [\frac{2^{n}-1}{2^n},1]$, we get the desired inequality in the same way as in \textit{(i)}. \qed
\end{proof}

As a direct consequence of Theorems \ref{t6} and  \ref{t6-6}, we get the generalized reverse Heinz operator inequalities as follows. That is Corollaries \ref{c3} and  \ref{c3-3} are operator versions of Corollaries \ref{c00} and \ref{c0011} respectively.

\begin{corollary}\label{c3}
Let $A, B\in B^{++}(H)$, $n \in \mathbb{N}$ such that $n\geq 2$ and $\frac{1}{2}\neq v\in \mathbb{R}$. Then we have the following inequalities.
\begin{enumerate}[(i)]
\item If $v \notin [\frac{1}{2},\frac{2^{n-1}+1}{2^n}]$, then
\begin{align*}
A\nabla B &\leq \hat{H}_{v}(A,B) +2(1-v)(A\nabla B-A\sharp B)\\
&\quad +(2v-1)\sum_{k=2}^{n}2^{k-2}\left( A\sharp B-2H_{\frac{2^{k-1}+1}{2^k}}(A,B)+H_{\frac{2^{k-2}+1}{2^{k-1}}}(A,B)\right).
\end{align*}

\item If $v \notin [\frac{2^{n-1}-1}{2^n},\frac{1}{2}]$, then
\begin{align*}
A\nabla B &\leq \hat{H}_{v}(A,B) +2v(A\nabla B-A\sharp B)\\
&\quad +(1-2v)\sum_{k=2}^{n}2^{k-2}\left( A\sharp B-2H_{\frac{2^{k-1}-1}{2^k}}(A,B)+H_{\frac{2^{k-2}-1}{2^{k-1}}}(A,B)\right).
\end{align*}
\end{enumerate}
\end{corollary}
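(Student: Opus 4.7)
The plan is to derive Corollary \ref{c3} by averaging two inequalities from Theorem \ref{t6}, mimicking at the operator level how the Heinz mean is constructed from the geometric mean via the symmetrization $v \leftrightarrow 1-v$.

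First I would verify the compatibility of the range hypotheses: if $v \notin [\frac{1}{2},\frac{2^{n-1}+1}{2^n}]$, then either $v<\frac{1}{2}$ (whence $1-v>\frac{1}{2}$) or $v>\frac{2^{n-1}+1}{2^n}$ (whence $1-v<\frac{2^{n-1}-1}{2^n}$); in either case $1-v \notin [\frac{2^{n-1}-1}{2^n},\frac{1}{2}]$, so Theorem \ref{t6}(ii) is available at the parameter $1-v$. I then apply Theorem \ref{t6}(i) at $v$ and Theorem \ref{t6}(ii) at $1-v$, add the two resulting operator inequalities, and divide by $2$. On the left this produces $\frac{A\nabla_v B + A\nabla_{1-v}B}{2} = A\nabla B$, and the leading operator term on the right becomes $\frac{A\natural_v B + A\natural_{1-v}B}{2} = \hat{H}_v(A,B)$ by the definitions in Section 1.

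Next I would match the coefficients. The arithmetic-minus-geometric term from Theorem \ref{t6}(ii) at $1-v$ has coefficient $2(1-v)$ (since $2v$ is replaced by $2(1-v)$ under the substitution), agreeing with the corresponding coefficient from (i) at $v$, so the two combine to $(1-v)\bigl[(A\nabla_v B - A\sharp B) + (A\nabla_{1-v}B - A\sharp B)\bigr] = 2(1-v)(A\nabla B - A\sharp B)$. For the summation, the coefficients $(2v-1)$ from (i) and $1-2(1-v)=(2v-1)$ from (ii) at $1-v$ also coincide. The key algebraic observation, which is the only non-trivial step, is the complementarity of the indices
$$\frac{2^{k-1}+1}{2^k} + \frac{2^{k-1}-1}{2^k} = 1, \qquad \frac{2^{k-2}+1}{2^{k-1}} + \frac{2^{k-2}-1}{2^{k-1}} = 1,$$
which together with the definition of $H_v(A,B)$ gives $A\sharp_{\frac{2^{k-1}+1}{2^k}}B + A\sharp_{\frac{2^{k-1}-1}{2^k}}B = 2H_{\frac{2^{k-1}+1}{2^k}}(A,B)$ and the analogous identity for the outer index pair. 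After dividing the summed inequality by $2$, the $k$-th summand assumes the form $(2v-1)2^{k-2}\bigl(A\sharp B - 2H_{\frac{2^{k-1}+1}{2^k}}(A,B) + H_{\frac{2^{k-2}+1}{2^{k-1}}}(A,B)\bigr)$, which is precisely the term claimed in part (i).

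Part (ii) will be handled symmetrically: apply Theorem \ref{t6}(ii) at $v$ and Theorem \ref{t6}(i) at $1-v$ (the hypothesis $v \notin [\frac{2^{n-1}-1}{2^n},\frac{1}{2}]$ forces $1-v \notin [\frac{1}{2},\frac{2^{n-1}+1}{2^n}]$ by the same case split), then average. The main obstacle is no more than careful bookkeeping of coefficients and spotting the complementary-index pairing that collapses the $\sharp$-means into Heinz means; there is no analytic subtlety, since the functional-calculus step has already been absorbed into Theorem \ref{t6}.
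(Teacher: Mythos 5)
Your proposal is correct and is essentially the paper's intended argument: the paper states Corollary \ref{c3} as a ``direct consequence'' of Theorem \ref{t6}, and the way to realize this is exactly your averaging of Theorem \ref{t6}(i) at $v$ with Theorem \ref{t6}(ii) at $1-v$, using $\frac{2^{k-1}+1}{2^k}+\frac{2^{k-1}-1}{2^k}=1$ and $A\sharp_{\mu}B+A\sharp_{1-\mu}B=2H_{\mu}(A,B)$ to collapse the geometric means into Heinz means. Your range check and coefficient bookkeeping are also correct, so nothing is missing.
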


\begin{remark}
 Putting $n=2$ in Corollary \ref{c3} with  $v\in[0,1]$ gives the inequalities obtained in \cite[Corollary 6]{xx}.
\end{remark}

\begin{corollary}\label{c3-3}
Let $A, B\in B^{++}(H)$, $n \in \mathbb{N}$ and $v \in \mathbb{R}$. Then, we have the following inequalities.
\begin{enumerate}[(i)]
\item  If $v \notin [0,\frac{1}{2^n}]$, then
\begin{align*}
A\nabla B &\leq \hat{H}_{v}(A,B)\\ &\qquad +v \sum_{k=1}^n 2^{k-1} \left( A\nabla B-2H_{\frac{1}{2^k}}(A,B)+H_{\frac{1}{2^{k-1}}}(A,B)\right).
\end{align*}

\item If $v \notin [\frac{2^{n}-1}{2^n},1]$, then
\begin{align*}
A\nabla B &\leq \hat{H}_{v}(A,B)\\
& \quad +(1-v) \sum_{k=1}^n 2^{k-1} \left( A\nabla B-2H_{\frac{2^k-1}{2^k}}(A,B)+H_{\frac{2^{k-1}-1}{2^{k-1}}}(A,B)\right).
\end{align*}
\end{enumerate}
\end{corollary}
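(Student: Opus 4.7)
The plan is to obtain each part by averaging two applications of Theorem~\ref{t6-6} — one to the pair $(A,B)$ and one to the swapped pair $(B,A)$ — at the same weight $v$, then collapsing the result via the symmetrization identities that define the Heinz mean. The algebraic facts I will lean on are the swap identities $X\natural_w Y = Y\natural_{1-w} X$ and $X\sharp_w Y = Y\sharp_{1-w} X$ (proved exactly as in the standard geometric mean case but now for $w\in\mathbb{R}$), together with
$$A\nabla_v B + A\nabla_{1-v} B = 2(A\nabla B),\qquad A\natural_v B + A\natural_{1-v} B = 2\hat{H}_v(A,B),$$
and, for any $w$, $A\sharp_w B + A\sharp_{1-w} B = 2H_w(A,B) = 2H_{1-w}(A,B)$.

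For part (i), assume $v\notin[0,1/2^n]$. First, I invoke Theorem~\ref{t6-6}(i) on $(A,B)$ with weight $v$. Second, I invoke the same Theorem~\ref{t6-6}(i) on the swapped pair $(B,A)$, still with weight $v$; since the hypothesis is a restriction on $v$ alone, this second application is legal. In the swapped inequality I rewrite $B\nabla_v A = A\nabla_{1-v} B$, $B\natural_v A = A\natural_{1-v} B$, and $B\sharp_{1/2^k} A = A\sharp_{(2^k-1)/2^k} B$ using the swap identities. Adding the two inequalities and dividing by $2$, the left sides collapse to $A\nabla B$, the two $\natural$-terms on the right collapse to $\hat{H}_v(A,B)$, the bare $A$ and $B$ summands combine to $2(A\nabla B)$, and for each $k$ the pair $A\sharp_{1/2^k} B + A\sharp_{(2^k-1)/2^k} B$ collapses to $2H_{1/2^k}(A,B)$. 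This is exactly the right-hand side of~(i).

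Part (ii) follows by the identical symmetrization: apply Theorem~\ref{t6-6}(ii) to $(A,B)$ and to $(B,A)$ under the hypothesis $v\notin[(2^n-1)/2^n,1]$, rewrite the swapped version through the $w\leftrightarrow 1-w$ identities, then add and halve. In this case the bare term inside the sum of Theorem~\ref{t6-6}(ii) is $B$ (and becomes $A$ after the swap), so again these combine to $2(A\nabla B)$, while the pairs $A\sharp_{(2^k-1)/2^k} B + A\sharp_{1/2^k} B$ collapse to $2H_{(2^k-1)/2^k}(A,B)$ via the symmetry $H_w=H_{1-w}$. I do not anticipate any real obstacle: Theorem~\ref{t6-6} is used as a black box, and what remains is bookkeeping on the $\sharp$-exponents after swapping operands. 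The only point that deserves one line of justification is that the hypotheses $v\notin[0,1/2^n]$ and $v\notin[(2^n-1)/2^n,1]$ are manifestly preserved under $A\leftrightarrow B$, since they constrain only $v$.
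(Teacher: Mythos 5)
Your symmetrization argument is correct and is exactly what the paper intends: Corollary~\ref{c3-3} is stated as a direct consequence of Theorem~\ref{t6-6}, obtained by adding that theorem's inequality for $(A,B)$ to the one for $(B,A)$ (equivalently, $v\mapsto 1-v$ via $A\natural_v B=B\natural_{1-v}A$ and $A\sharp_w B=B\sharp_{1-w}A$) and halving, just as the scalar Corollary~\ref{c0011} follows from Theorem~\ref{t11}. The one point you rightly flag --- that the swap identity for $\natural_w$ persists for all $w\in\mathbb{R}$ on $B^{++}(H)$ --- is the only detail needing a line, and your treatment of it is fine.
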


\begin{remark}
If we put $n=1$, Corollary \ref{c3-3} gives the inequality shown in \cite[Theorem 3.1]{aa} where $v \notin [\frac{1}{2},1]$.
\end{remark}

\begin{acknowledgements}
The authors express their gratitude to the editor-in-chief  Prof. Rosihan M. Ali and the anonymous referees for their careful reading and detailed comments which have considerably improved the paper.
\end{acknowledgements}


\begin{thebibliography}{}
%
%
 \bibitem{AFK2015} H. Alzer, C. M. da.  Fonseca and A. Kova\v{c}ec, Young-type inequalities and their matrix analogues, Linear Multilinear Algebra, \textbf{63}, 622--635 (2015).
        
\bibitem{aa} M. Bakherad and M. S. Moslehian, Reverse and variations of Heinz inequality, Linear Multilinear Algebra, \textbf{63} (10), 1972--1980 (2015).        
        
\bibitem{bb} R. Bhatia, Interpolating the arithmetic-geometric mean inequality and it's operator version, Linear Algebra Appl, \textbf{413}, 355--363 (2006).      
        
        
                         
        
\bibitem{D2015_03} S. S. Dragomir, On new refinements and reverses of Young's operator inequality, RGMIA Research Report Collection, \textbf{18}, 1--13 (2015).

\bibitem{cc} S. Furuichi, On refined young inequalities and reverse inequalities, J. Math. Inequal, \textbf{5} (1),  21--31 (2011).

\bibitem{FM2011} S. Furuichi and N. Minculete, Alternative reverse inequalities for Young’'s inequality, J. Math. Inequal, \textbf{5}, 595--600 (2011).

\bibitem{dd} S. Furuichi, Refined Young inequalities with Specht's ratio, J. Egyptian Math. Soc, \textbf{20}, 46--49 (2012).

\bibitem{ee} T. Furuta and M. Yanagida, Generalized means and convexity of inversion for positive operators, Amer. Math. Monthly, \textbf{105}, 258--259 (1998).

\bibitem{ff} T. Furuta, Invitation to Linear Operators: From Matrix to bounded linear operators on a Hilbert space, Taylor and Francis (2002).

\bibitem{Mond} T. Furuta, J. Mi\'{c}i\'{c} Hot, J. Pe\v{c}ari\'{c} and Y. Seo, Mond-Pe\v{c}ari\'{c} method in operator inequalities, Element, Zagreb (2002).

\bibitem{xx} M. B. Ghaemi, N. Gharakhanlu and S. Furuichi, On the reverse Young and Heinz inequalities,  J. Math. Inequal, To appear.


\bibitem{gg} O. Hirzallah and  F. Kittaneh, Matrix Young inequalities for the Hilbert-Schmidt norm,  Linear Algebra Appl, \textbf{308},  77--84 (2000).

\bibitem{hh} O. Hirzallah, F. Kittaneh,  M. Krni\'{c}, N. Lovri\v{c}evi\'{c} and J. Pe\v{c}ari\'{c}, Eigenvalue inequalities for differences of means of Hilbert space operators,  Linear Algebra Appl, \textbf{436}, 1516--1527 (2012).

\bibitem{ii} F. Kittaneh and M. Krni\'{c}, Refined Heinz operator inequalities,  Linear Multilinear Algebra, \textbf{61}, 1148--1157 (2013).

\bibitem{jj} F. Kittaneh, M.  Krni\'{c}, N.  Lovri\v{c}evi\'{c} and J. Pe\v{c}ari\'{c}, Improved arithmetic-geometric and Heinz means inequalities for Hilbert space operators,  Publ. Math. Debrecen, \textbf{80},  465--478 (2012).

\bibitem{kk} F. Kittaneh and Y. Manasrah, Improved Young and Heinz inequalities for matrices,  J. Math. Anal. Appl, \textbf{361},  262--269 (2010).

\bibitem{ll}  F. Kittaneh and Y. Manasrah, Reverse Young and Heinz inequalities for matrices,   Linear Multilinear Algebra, \textbf{59}, 1031--1037 (2011).

\bibitem{mm}  M. Krni\'{c}, N. Lovri\v{c}evi\'{c} and J. Pe\v{c}ari\'{c}, Jensen's operator and applications to mean inequalities for operators in Hilbert space, Bull. Malays. Math. Sci. Soc, \textbf{35}, 1--14 (2012).

\bibitem{nn}   F. Kubo and T. Ando, Means of positive operators, Math. Ann, \textbf{264}, 205--224 (1980).

\bibitem{qq}  J. Mi\'{c}i\'{c}, J. Pe\v{c}ari\'{c} and  V. \v{S}imi\'{c}, Inequalities involving the arithmetic and geometric means, Math. Inequal. Appl, \textbf{3} (11), 415--430 (2008).

\bibitem{SC}  M. Sababheh and D. Choi, A complete refinement of Young's inequality, J. Math. Anal. Appl, \textbf{440}, 379--393 (2016).

\bibitem{SM} M. Sababheh and M. S. Moslehian, Advanced refinements of Young and Heinz inequalities, J. Number Theory, \textbf{172}, 178--199 (2017).

\bibitem{SS} A. Salemi and A. Sheikh Hosseini, On reversing of the modified Young inequality, Ann. Funct. Anal, \textbf{5} (1), 70--76 (2014).

\bibitem{oo}  J. Zhao and J. Wu, Operator inequalities involving improved Young and its reverse inequalities, J. Math. Anal. Appl, \textbf{421}, 1779--1789 (2015).
\end{thebibliography}


\end{document}